\documentclass[12pt,reqno]{amsart}
\usepackage{amsfonts}
\usepackage{tikz}
\usetikzlibrary{arrows.meta, positioning}
\usepackage{a4wide,graphicx}
\usepackage{amsmath,amsthm,amssymb,amscd}
\usepackage{latexsym}
\usepackage{hyperref}
\usepackage[numbers,sort&compress]{natbib}
\usepackage{hypernat}
\usepackage{color}
\usepackage{subfigure}
\usepackage{pdflscape}

\allowdisplaybreaks
\numberwithin{equation}{section}

\makeatletter
\newcommand{\rmnum}[1]{\romannumeral #1}
\newcommand{\Rmnum}[1]{\expandafter\@slowromancap\romannumeral #1@}
\makeatother

\newtheorem{theorem}{Theorem}[section]
\newtheorem{proposition}[theorem]{Proposition}

\newtheorem{lemma}[theorem]{Lemma}

\theoremstyle{definition}

\newtheorem{remark}[theorem]{Remark}

\newcommand{\va}{\varepsilon}

\newcommand{\la}{\lambda}

\newcommand{\be}{\beta}
\newcommand{\al}{\alpha}

\newcommand{\mc}{\mathcal}
\newcommand{\intg}{\int_{\mc{G}}}
\newcommand*{\dif}{\,\mathrm{d}}
\newcommand{\rhp}{\rightharpoonup}

\def\r{\mathbb{R}}

\begin{document}
	\title [Multiple positive bound states for NLS on noncompact metric graphs]
	{Multiple positive bound states for NLS equations on noncompact metric graphs with an attractive potential
	}
	
	\author{
		Quan Liu \textsuperscript{1}$^{, \ast}$
	}
	
	\thanks{
		\textsuperscript{1} School of Mathematics and Computer Science, Gannan Normal University, Ganzhou, Jiangxi, 341000, P.R. China
	}
	\thanks{Email: quan\_liu@gnnu.edu.cn}
	\thanks{*Corresponding author}

\begin{abstract}
	In this paper, we establish the existence of bounded states and geometrically distinct solutions for the subcritical NLS equation with attractive potential on metric graphs $\mc{G}$ when the mass $\mu$ is large enough.
	We show that the NLS equation exists at least as many bound states of mass $\mu$ as the number of bounded edges of $\mc{G}$ if the attractive potential satisfies some suitable assumptions.
	It is worth noting that this is different from the case of ground state, which on some graphs may fail to exist for every value of $\mu$.

	{\bf Key words}: variational methods; Nonlinear Schr\"{o}dinger equations; noncompact metric graph; multiple solutions.
	
	{\bf AMS Subject Classifications:} 35R02, 35Q65, 49J40
\end{abstract}

\maketitle

\section{Introduction}
In recent decades, Nonlinear Schr\"{o}dinger equations (NLS) on metric graphs has been intensively studied. 
The NLS equation on graphs can describe the evolutionary phenomena in the networks and it holds significant importance in engineering and scientific research, such as waveguides, photonic crystals and nanostructures, to name a few. 
For more physical background of such equations we refer to surveys \cite{Noja2014,BK,BCFK}.
In this paper, we investigate the existence of bounded states for the following NLS
\begin{align}\label{eq1}
\left\{\begin{array}{ll}
-u^{\prime\prime}+(W(x)+\la)u=|u|^{p-2}u, \quad 2<p<6, \\
\|u\|^2_{L^2(\mc{G})}=\mu
\end{array} \right.
\end{align}
on a noncompact metric graph $\mc{G}$ together with the continuity and Kirchhoff conditions (see \eqref{eq6} below) at every vertex $\rm{v}$ of $\mc{G}$, where a mass $\mu>0$ is given in advance and the unknown parameter $\la$ is a Lagrange multiplier. 
The solutions of \eqref{eq1} give the standing waves of the following time-dependent NLS equation on $\r\times\mc{G}$:
\begin{equation*}
	\mathrm{i}\dfrac{\partial\psi(t,x)}{\partial t}=-\dfrac{\partial^2\psi(t,x)}{\partial x^2}+W(x)\psi(t,x)-|\psi(t,x)|^{p-2}\psi(t,x),
\end{equation*}
where $\psi(t,x)=\exp(\mathrm{i}\lambda t)u(x)$.
The constraints in \eqref{eq1} are physically meaningful and can be used to represent the conservation of mass (or charge) and energy.
For these reasons, the existence of solutions to \eqref{eq1} on metric graphs has attracted a lot of attention in the last decade, mainly in the autonomous case, which respectively correspond to $W(x)=0$. Within this framework, one is concerned with the existence of ground state solutions, that is, global minimizers of the energy under the mass constraint (see \cite{ACFN2014,ASTcmp,AST2015,AST2016,ACFNjde,PS2022} for noncompact $\mc{G}$, and \cite{CDS,D2018JDE,ADST2019,BMD2021,CJS2022} for compact $\mc{G}$). 
When $\lambda$ is fixed and the mass is unknown, we refer to \cite{ACT2024,DDGS2023,P2018}.
For a review results and open problems related to the NLS equation on metric graphs we refer to \cite{Noja2014,DST2020}.

In what follows we shall focus attention on the existence of the bounded state. We shall proved that there exist at least as many bounded states as the number of bounded edges of $\mc{G}$ when $\mc{G}$ is any noncompact metric graph and $\mu$ is large enough (see Theorem \ref{thm1}).
It is worth noting that this is different from the case of the ground state. There is no ground state for any value of the mass on some graphs (see \cite{AST2015,AST2016}). Our approach is variational and based on a doubly constrained minimization procedure. 
Firstly, it is standard to check that the nontrivial critical points of $\mc{E}(\cdot,\mc{G})$ (see \eqref{eq3} for a precise definition) correspond exactly to the weak solutions in $\mathbb{H}$ (see \eqref{Neqeq1} for a precise definition) of Eq. \eqref{eq1}. Secondly, for each bounded edge $e$ of $\mc{G}$, we minimize $\mc{E}(u,\mc{G})$ among all functions which $L^\infty$ norm is achieved on the edge $e$ and satisfies mass constraint (see \eqref{eq8}). We prove that there exists a minimizer of $\mc{E}(u,\mc{G})$ when $\mu$ is large enough. It is worth noting that the set $\mc{H}_\mu(\mc{G})$ which we seek the minimizer is not a compact set since the mass constraint. Thus, the existence of a minimizer is nontrivial. 
In order to prove the existence of the minimizer, we first provide a quantitative lower bound on the energy functional $\mc{E}(u_n,\mc{G})$ based on the difference between the mass of $u_n$ and the weak limit $u$ of $u_n$, where $\{u_n\}\subset \mc{H}_\mu(\mc{G})$ is a minimizing sequence of $\mc{E}(\cdot,\mc{G})$ (see Lemma \ref{lem3} and Lemma \ref{lem4}). Then, we devote to proving the concentration compactness principle for minimizing sequence. 
We point out that, to establish this result, the techniques based on translation, usually utilized in the Euclidean setting, do not work, since $\mc{G}$ is not translation invariant. Fortunately, we can show that the limit energy level along the halfline of $\mc{G}$ is not smaller than the energy level of a solition (see Lemma \ref{lem4}). Then we show that minimizing sequences are bounded, strongly compact in $L^2$ and in fact convergent if its mass is large enough (see Proposition \ref{pro2}). 
However, this not implies that the minimizer $u$ is a critical point of the NLS energy under the mass constraint. In fact, when $u$ moves in $S_{\mu}(\mc{G})$ ($L^2-$sphere in $\mathbb{H}$), it may violate the additional constraint condition that $u$ achieves its maximum value on the edges $e$ of graph $\mc{G}$. Finally, we proved that $u$ obtains its maximum value only on $e$ when $\mu$ is large enough (see Proposition \ref{pro3}). Thus, the minimizer $u$ lies in the interior of the additional constraint. Therefore, we can prove that the minimizer $u$ is a critical point of the NLS energy under the mass constraint.

We point out that the solutions obtained in this paper are not, in general, ground state solutions. Moreover, if the graph $\mc{G}$ has $k$ bounded edges and the mass $\mu$ is large enough, we obtain at least $k$ geometrically distinct solutions. For example, the following graph (see Figure \ref{fig:0}) consists of $3$ unbounded edges, $11$ bounded edges and $7$ vertices. Therefore, we claim that the Eq.\eqref{eq1} at least $11$ bounded states when $\mu$ large enough. Finally, as far as we know, it remains an open question whether there exist bound state solutions to Eq.\eqref{eq1} when the mass $u$ is not very large.

\begin{flushleft}
	{\bf Basic notation and main result}
\end{flushleft}

Firstly, we briefly review some basic notations (see \cite{BCFK} or \cite{EKKST} for more details). We recall that a graph is a couple $\mc{G}:=(V,E)$, where $V=\{\mathrm{v}_k\}$ is a set of vertices and $E=\{e_j \}$ is a set of edges. We always assume that the cardinalities $|E|$ of $E$ and $|V|$ of $V$ are finite.
A metric graph $\mc{G}$ is a graph with a metric structure on any edge. Every bounded edge of $\mc{G}$ is identified with a oriented segment, i.e. $e\sim I_e:=[0,l_e]$, where $e\in E$ and $ l_e$ denotes the length of $e$, while any unbounded edge is identified with a closed half-line, i.e. $I_e=[0,+\infty)$. If $x,y\in \mc{G} $, the distance $d(x, y)$ is the infimum of the length of the paths connecting the two points. Thus, $(\mc{G},d)$ is a metric space ($\mc{G}$, for short). The metric graph $\mc{G}$ is a noncompact metric graph if and only if there exists $e_0\in E$ such that $l_{e_0}=+\infty$. Two very special cases are $\mc{G}$ consists of just one unbounded edge (i.e. $\mc{G}=\r^+$) and $\mc{G}$ obtained by gluing together two unbounded edges (i.e. $\mc{G}=\r$). According to the topology of graph, the metric graph allows for self-loops and unbounded edges (for example, see Figure \eqref{fig:0}). 

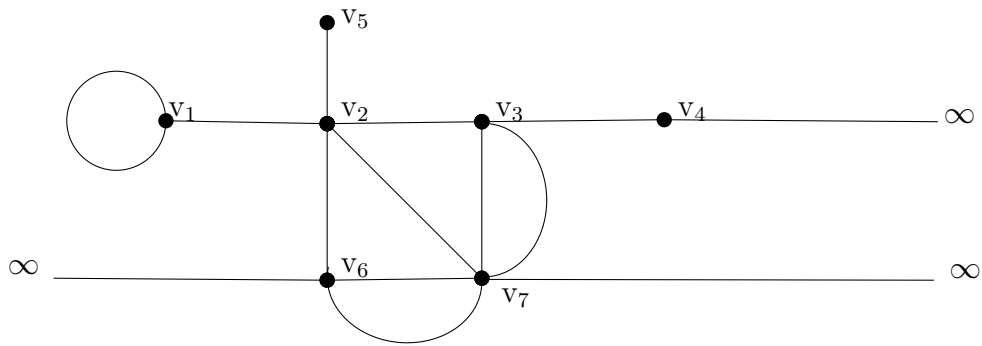
\begin{figure}[hb]
	\centering
	\begin{tikzpicture}[x=0.75pt,y=0.75pt,yscale=-1,xscale=1]
	\draw  (81,240) .. controls (81,226.19) and (92.19,215) .. (106,215) .. controls (119.81,215) and (131,226.19) .. (131,240) .. controls (131,253.81) and (119.81,265) .. (106,265) .. controls (92.19,265) and (81,253.81) .. (81,240) -- cycle ;
	\draw [color={rgb, 255:red, 0; green, 0; blue, 0 }  ,draw opacity=1 ]   (212.33,241.47) -- (131,240) ;
	\draw [shift={(131,240)}, rotate = 181.03] [color={rgb, 255:red, 0; green, 0; blue, 0 }  ,draw opacity=1 ][fill={rgb, 255:red, 0; green, 0; blue, 0 }  ,fill opacity=1 ][line width=0.75] (0, 0) circle [x radius= 3.35, y radius= 3.35];
	\draw [shift={(212.33,241.47)}, rotate = 181.03] [color={rgb, 255:red, 0; green, 0; blue, 0 }  ,draw opacity=1 ][fill={rgb, 255:red, 0; green, 0; blue, 0 }  ,fill opacity=1 ][line width=0.75]      (0, 0) circle [x radius= 3.35, y radius= 3.35];
	\draw [color={rgb, 255:red, 0; green, 0; blue, 0 }  ,draw opacity=1 ]   (290.33,240.47) -- (212.33,241.47) ;
	\draw [shift={(212.33,241.47)}, rotate = 179.27] [color={rgb, 255:red, 0; green, 0; blue, 0 }  ,draw opacity=1 ][fill={rgb, 255:red, 0; green, 0; blue, 0 }  ,fill opacity=1 ][line width=0.75]      (0, 0) circle [x radius= 3.35, y radius= 3.35]   ;
	\draw [shift={(290.33,240.47)}, rotate = 179.27] [color={rgb, 255:red, 0; green, 0; blue, 0 }  ,draw opacity=1 ][fill={rgb, 255:red, 0; green, 0; blue, 0 }  ,fill opacity=1 ][line width=0.75]      (0, 0) circle [x radius= 3.35, y radius= 3.35];
	\draw [color={rgb, 255:red, 0; green, 0; blue, 0 }  ,draw opacity=1 ]   (212.33,190.47) -- (212.33,241.47) ;
	\draw [shift={(212.33,241.47)}, rotate = 90] [color={rgb, 255:red, 0; green, 0; blue, 0 }  ,draw opacity=1 ][fill={rgb, 255:red, 0; green, 0; blue, 0 }  ,fill opacity=1 ][line width=0.75]      (0, 0) circle [x radius= 3.35, y radius= 3.35]   ;
	\draw [shift={(212.33,190.47)}, rotate = 90] [color={rgb, 255:red, 0; green, 0; blue, 0 }  ,draw opacity=1 ][fill={rgb, 255:red, 0; green, 0; blue, 0 }  ,fill opacity=1 ][line width=0.75]      (0, 0) circle [x radius= 3.35, y radius= 3.35]   ;
	\draw [color={rgb, 255:red, 0; green, 0; blue, 0 }  ,draw opacity=1 ]   (212.33,320.47) -- (212.33,241.47) ;
	\draw [shift={(212.33,241.47)}, rotate = 270] [color={rgb, 255:red, 0; green, 0; blue, 0 }  ,draw opacity=1 ][fill={rgb, 255:red, 0; green, 0; blue, 0 }  ,fill opacity=1 ][line width=0.75]      (0, 0) circle [x radius= 3.35, y radius= 3.35];
	\draw [shift={(212.33,320.47)}, rotate = 270] [color={rgb, 255:red, 0; green, 0; blue, 0 }  ,draw opacity=1 ][fill={rgb, 255:red, 0; green, 0; blue, 0 }  ,fill opacity=1 ][line width=0.75]      (0, 0) circle [x radius= 3.35, y radius= 3.35]   ;
	\draw [color={rgb, 255:red, 0; green, 0; blue, 0 }  ,draw opacity=1 ]   (290.33,319.47) -- (290.33,240.47) ;
	\draw [shift={(290.33,240.47)}, rotate = 270] [color={rgb, 255:red, 0; green, 0; blue, 0 }  ,draw opacity=1 ][fill={rgb, 255:red, 0; green, 0; blue, 0 }  ,fill opacity=1 ][line width=0.75]      (0, 0) circle [x radius= 3.35, y radius= 3.35]   ;
	\draw [shift={(290.33,319.47)}, rotate = 270] [color={rgb, 255:red, 0; green, 0; blue, 0 }  ,draw opacity=1 ][fill={rgb, 255:red, 0; green, 0; blue, 0 }  ,fill opacity=1 ][line width=0.75]      (0, 0) circle [x radius= 3.35, y radius= 3.35]   ;
	\draw [color={rgb, 255:red, 0; green, 0; blue, 0 }  ,draw opacity=1 ]   (290.33,319.47) -- (212.33,320.47) ;
	\draw [shift={(212.33,320.47)}, rotate = 179.27] [color={rgb, 255:red, 0; green, 0; blue, 0 }  ,draw opacity=1 ][fill={rgb, 255:red, 0; green, 0; blue, 0 }  ,fill opacity=1 ][line width=0.75]      (0, 0) circle [x radius= 3.35, y radius= 3.35]   ;
	\draw [shift={(290.33,319.47)}, rotate = 179.27] [color={rgb, 255:red, 0; green, 0; blue, 0 },draw opacity=1 ][fill={rgb, 255:red, 0; green, 0; blue, 0 } ,fill opacity=1 ][line width=0.75] (0, 0) circle [x radius= 3.35, y radius= 3.35];
	\draw [color={rgb, 255:red, 0; green, 0; blue, 0 }  ,draw opacity=1 ]   (382.33,239.47) -- (290.33,240.47) ;
	\draw [shift={(290.33,240.47)}, rotate = 179.38] [color={rgb, 255:red, 0; green, 0; blue, 0 }  ,draw opacity=1 ][fill={rgb, 255:red, 0; green, 0; blue, 0 }  ,fill opacity=1 ][line width=0.75](0, 0) circle [x radius= 3.35, y radius= 3.35];
	\draw [shift={(382.33,239.47)}, rotate = 179.38] [color={rgb, 255:red, 0; green, 0; blue, 0 }  ,draw opacity=1 ][fill={rgb, 255:red, 0; green, 0; blue, 0 }  ,fill opacity=1 ][line width=0.75]      (0, 0) circle [x radius= 3.35, y radius= 3.35]   ;
	\draw [color={rgb, 255:red, 0; green, 0; blue, 0 }  ,draw opacity=1 ]   (520.33,240.47) -- (382.33,239.47) ;
	\draw    (212.33,241.47) -- (290.33,319.47) ;
	\draw [color={rgb, 255:red, 0; green, 0; blue, 0 }  ,draw opacity=1 ]   (212.33,320.47) -- (74.33,319.47) ;
	\draw  [draw opacity=0] (288.99,241.43) .. controls (289.7,241.37) and (290.42,241.33) .. (291.15,241.33) .. controls (308.73,241.3) and (323.02,258.61) .. (323.06,280) .. controls (323.11,301.39) and (308.89,318.76) .. (291.31,318.8) .. controls (289.83,318.8) and (288.38,318.69) .. (286.96,318.45) -- (291.23,280.07) -- cycle ; \draw   (288.99,241.43) .. controls (289.7,241.37) and (290.42,241.33) .. (291.15,241.33) .. controls (308.73,241.3) and (323.02,258.61) .. (323.06,280) .. controls (323.11,301.39) and (308.89,318.76) .. (291.31,318.8) .. controls (289.83,318.8) and (288.38,318.69) .. (286.96,318.45) ;  
	\draw  [draw opacity=0] (290.31,320.06) .. controls (290.34,320.79) and (290.35,321.51) .. (290.32,322.24) .. controls (289.67,339.81) and (271.7,353.13) .. (250.19,352) .. controls (228.68,350.87) and (211.77,335.7) .. (212.42,318.13) .. controls (212.47,316.65) and (212.65,315.19) .. (212.95,313.78) -- (251.37,320.19) -- cycle ; \draw   (290.31,320.06) .. controls (290.34,320.79) and (290.35,321.51) .. (290.32,322.24) .. controls (289.67,339.81) and (271.7,353.13) .. (250.19,352) .. controls (228.68,350.87) and (211.77,335.7) .. (212.42,318.13) .. controls (212.47,316.65) and (212.65,315.19) .. (212.95,313.78) ;  
	\draw [color={rgb, 255:red, 0; green, 0; blue, 0 }  ,draw opacity=1 ]   (518.33,320.47) -- (290.31,320.06);
	\draw (522,233) node [anchor=north west][inner sep=0.75pt] {$\infty $};
	\draw (131,229.4) node [anchor=north west][inner sep=0.75pt]    {$\mathrm{v}_{1}$};
	\draw (218,229.4) node [anchor=north west][inner sep=0.75pt]    {$\mathrm{v}_{2}$};
	\draw (218,182.4) node [anchor=north west][inner sep=0.75pt]    {$\mathrm{v}_{5}$};
	\draw (296,229.4) node [anchor=north west][inner sep=0.75pt]    {$\mathrm{v}_{3}$};
	\draw (388,229.4) node [anchor=north west][inner sep=0.75pt]    {$\mathrm{v}_{4}$};
	\draw (50,309.4) node [anchor=north west][inner sep=0.75pt]{$\infty $};
	\draw (525,311.4) node [anchor=north west][inner sep=0.75pt]    {$\infty $};
	\draw (218,308.4) node [anchor=north west][inner sep=0.75pt]    {$\mathrm{v}_{6}$};
	\draw (299,323.4) node [anchor=north west][inner sep=0.75pt]    {$\mathrm{v}_{7}$};
	\end{tikzpicture}
	\caption{A graph with 7 vertices and 14 edges: 3 unbounded edges (halflines) and 11 bounded edges.}
	\label{fig:0}
\end{figure}

A function $u$ on the metric graph is a family of functions $u_e:I_e\to \r$, i.e. $u\equiv\{u_e\}_{e\in E}$. If a function $u: \mc{G}\to \r$ is integrable, then each component $u_e$ of $u$ on the edge $e$ is integrable on the $I_e$ and 
\[\int_{\mc{G}}u\dif x=\sum_{e\in E}\int_{I_e}u_e(x)\dif x.\]
Similarly, one can define $L^p$ spaces over $\mc{G}$, denote by $L^p(\mc{G})$, with norm
\[\|u\|_{L^p(\mc{G})} =\left( \sum_{e\in E}\int_{I_e}|u_e(x)|^p\dif x\right)^{\frac{1}{p}}, \qquad 1\leq p<+\infty\]
and 
\[ \|u\|_{L^{\infty}(\mc{G})}=\max_{e\in E}\|u_e\|_{L^\infty(I_e)}=\max_{e\in E}\sup_{x\in I_e}|u_e(x)|, \qquad  p=+\infty. \]
In particular, $L^2(\mc{G})$ is a Hilbert space and its inner product defined by 
\[\left(u,v \right):=\int_{\mc{G}} uv\dif x,  \]
such that $\|u\|_{L^2(\mc{G})} =\sqrt{\left(u,u \right)}.$ Clearly, continue function space over $\mc{G}$, denote by $C(\mc{G},\r)$, is naturally defined edge by edge. 
Furthermore, the Sobolev space $H^1(\mc{G})$ is defined in the nature way
\[H^1(\mc{G}):=\left\lbrace u\in C(\mc{G},\r):\quad u_e\in H^1(I_e) \;\;\text{for all}\;\; e\in E \right\rbrace \]
and the norm in $H^1(\mc{G})$ is defined as 
\[\|u\|_{H^1(\mc{G})}:=\left( \sum_{e\in E}\int_{I_e}|u_e^\prime |^2+|u_e|^2\dif x  \right) ^{\frac{1}{2}}.\]
It is noted that the continuity condition in the definition of $H^1(\mc{G})$ means that if $\mathrm{v}$ belongs to different edges $e_k$, the corresponding function $u_{e_k}$ takes the same value at $\mathrm{v}$.

In order to states our results, we give the assumptions on $W$. We assume that $W\in C(\mc{G},\r)$ satisfies
\begin{itemize}
	\item[$(W_1)$] $W_\infty:=\|W\|_{L^{\infty}(\mc{G})}=\lim\limits_{x\to\infty}W_e(x)\in (0,+\infty)$ for all unbounded edges $e$;
	\item[$(W_2)$] $\inf\limits_{x\in\mc{G}}W(x):=\min_{e\in E}\inf_{x\in I_e}W_e(x)\geq W_{-}>-\infty$;
	\item[$(W_3)$] If $\mc{G}=\r$, we have $W(x)=W(-x)$.
\end{itemize}

We shall work in space 
\begin{equation}\label{Neqeq1}
	\mathbb{H}:=\left\lbrace u\in H^1(\mc{G}): \;\;\sum_{e\in E}\int_{I_e}W_e|u_e|^2\dif x<\infty \right\rbrace  
\end{equation}
with norm 
\[\|u\|_{\mathbb{H}}:=\left( \sum_{e\in E}\int_{I_e}|u_e^\prime |^2+W_e|u_e|^2\dif x  \right) ^{\frac{1}{2}}.\]
\begin{remark}\label{rem1}
	Without loss of generality, we may assume that $W_\infty=0$. If not, we may replace $(W (x), \la)$ by
	\[(\widehat{W}(x),\widehat{\la}):=(W(x)-W_\infty,\la+W_\infty). \]
\end{remark}

We define the energy functional $\mc{E}(\cdot,\mc{G}): \mathbb{H}\to \r$ by 
\begin{align}\label{eq3}
	\mc{E}(u,\mc{G})&:=\dfrac{1}{2}\intg |u^\prime (x)|^2+W(x)|u|^2 \dif x-\dfrac{1}{p}\intg |u(x)|^p \dif x.
\end{align} 
It is well know that if $u_0\in \mathbb{H}$ is a critical point of $\mc{E}(\cdot,\mc{G})$ submitted to the mass constraint
\begin{equation*}
	S_{\mu}(\mc{G}):=\left\lbrace u\in \mathbb{H}:\;\;\|u\|^2_{L^2(\mc{G})}=\mu \right\rbrace,
\end{equation*}
then there is a Lagrange multiplier $\la_0$ such that $(u_0,\la_0)$ solves \eqref{eq1}. More precisely, if $u_0\in \mathbb{H}$ is such a critical point, then there exists a Lagrange multiplier $\la_0\in \r$ such that $(u_0,\la_0)$ satisfies the following equation:
\begin{align}\label{eq6}
\left\{\begin{array}{ll}
-u^{\prime\prime}+(W(x)+\la)u=|u|^{p-2}u\quad \text{for every edge} \;\; e\in E,\\
\sum\limits_{e\succ \mathrm{v}}\dfrac{\dif u_e}{\dif x}(\mathrm{v})=0\quad \text{at every vertex} \;\;\mathrm{v}\in V,
\end{array} \right.
\end{align}
where $e\succ \mathrm{v}$ denotes the edge $e$ such that $\mathrm{v}$ is an endpoint of $e$. The second equation of \eqref{eq6} is the so-called Kirchhoff condition and is a natural from of continuity of $u^\prime$ at the vertices of $\mc{G}$ (see \cite{AST2015,ACFN2014,ACFN2016} for more details).

Our main existence results are as follows:
\begin{theorem}\label{thm1}
	Let $\mc{G}$ be a noncompact and connected metric graph, having $k$ bounded edges $\{e_i\}_{i=1}^k$. Then equation \eqref{eq1} admits at least $k$ bound states $\{u_i\}_{i=1}^k$ for large $\mu$. Moreover, the maximum of $u_i$ is attained only at the edge $e_i$ and either $u_i>0$ or $u_i<0$.
\end{theorem}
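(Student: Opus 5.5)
For each bounded edge $e=e_i$ the plan is a doubly constrained minimization. Normalize $W_\infty=0$ as in Remark \ref{rem1}, and set
\[
\mc{H}_\mu^e(\mc{G}):=\bigl\{u\in S_\mu(\mc{G}):\ \|u\|_{L^\infty(\mc{G})}=\|u_e\|_{L^\infty(I_e)}\bigr\},\qquad
\mc{E}_e(\mu):=\inf_{u\in\mc{H}^e_\mu(\mc{G})}\mc{E}(u,\mc{G}).
\]
Replacing $u$ by $|u|$ keeps us in $\mc{H}_\mu^e$ and does not change the energy, so we may work with nonnegative minimizing sequences. The proof then has three steps: existence of a minimizer, showing the maximum lies strictly inside $e$, and deducing that it is a free critical point on $S_\mu(\mc{G})$.

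\textbf{Step 1: an upper bound for $\mc{E}_e(\mu)$.} I would implant a rescaled soliton on $e$. Let $\phi_\mu$ be the soliton of mass $\mu$ on $\r$; then $\phi_\mu(x)=\mu^{\alpha}\phi_1(\mu^{\beta}x)$ with $\beta=(p-2)/(6-p)>0$. Center it at the midpoint of $e$, cut it off near the endpoints, and restore the mass. Because $\phi_\mu$ decays like $e^{-c\mu^\beta}$ away from its center, and the potential term is only $O(\mu)$ since $W$ is bounded, this gives
\[
\mc{E}_e(\mu)\le \mc{E}(\phi_\mu,\r)+C\mu=-\theta_p\mu^{2\beta+1}+C\mu .
\]
This is negative and strictly below the half-line soliton level $\mc{E}(\phi_\mu,\r^+)$, which is of the form $-\theta'_p\mu^{2\beta+1}$ with a larger constant because of the scaling on the half-line.

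\textbf{Step 2: compactness of minimizing sequences (Proposition \ref{pro2}).} Gagliardo--Nirenberg on $\mc{G}$ together with $(W_2)$ shows that minimizing sequences are bounded in $\mathbb{H}$. So $u_n\rightharpoonup u$ and $u_n\to u$ locally uniformly. Write $m=\|u\|_2^2$. By Lemmas \ref{lem3}--\ref{lem4}, the part of the mass escaping along the half-lines costs at least the half-line soliton energy of mass $\mu-m$, and $W\to0$ there. This yields
\[
\mc{E}_e(\mu)\ \ge\ \mc{E}(u,\mc{G})+\mc{E}(\phi_{\mu-m},\r^+)+o(1).
\]
Combined with the strict subadditivity that comes from the superlinear scaling $\mu^{2\beta+1}$ and with the Step 1 bound, this rules out both $m=0$ and $0<m<\mu$ once $\mu$ is large. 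Hence $u_n\to u$ in $L^2$, then in $L^p$ and in $L^\infty$ (by interpolation). Uniform convergence preserves the constraint that the maximum is attained on $e$, and weak lower semicontinuity gives that $u$ is a minimizer.

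\textbf{Step 3: the maximum is only on $e$ (Proposition \ref{pro3}), the step I expect to be the main obstacle.} The difficulty is that we must show the minimizer is not sitting on the boundary of the constraint, i.e. it does not also reach its maximum on another edge. My plan is to work at the scale $\mu^{-\beta}$. By the energy bound in Step 1 and uniqueness of the soliton on $\r$ up to translation, the blow-up $\mu^{-\alpha}u(x_\mu+\mu^{-\beta}y)$ converges to $\phi_1$, where $x_\mu\in e$ is a maximum point. Near-optimality forces $x_\mu$ to stay at distance $\gg\mu^{-\beta}$ from the vertices; otherwise the profile would feel a star/half-line geometry and pay an energy gap of order $\mu^{2\beta+1}$. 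Away from an $O(\mu^{-\beta})$ neighbourhood of $x_\mu$, the function $u$ is then exponentially small relative to $\|u\|_\infty$. So the maximum is attained only inside $e$, and $u$ is an interior point of the extra constraint. Small perturbations within $S_\mu(\mc{G})$ therefore stay admissible, and $u$ is a constrained critical point. The Lagrange multiplier rule gives $\la$ such that \eqref{eq6} holds, and the strong maximum principle (ODE uniqueness on each edge together with the Kirchhoff condition) gives $u>0$; the choice $-u$ gives the negative solution. Solutions obtained for different edges $e_i$ are geometrically distinct because their unique maxima lie on different edges. Under $(W_3)$ with $\mc{G}=\r$ there are no bounded edges, so the statement is vacuous there.
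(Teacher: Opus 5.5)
Your architecture matches the paper's: minimize over functions whose sup is attained on $e$, implant a soliton for the upper bound, run concentration--compactness, show the maximum is attained only on $e$ so the minimizer is interior, then take the Lagrange multiplier and prove positivity. However, two load-bearing steps do not work as written.

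\textbf{Step 2: the energy comparison goes the wrong way.} The half-line ground-state level is $\inf_{S_\mu(\r^+)}\mc{E}_{\mathrm{NLS}}(\cdot,\r^+)=-\theta2^{2\be}\mu^{2\be+1}$. Since $2^{2\be}>1$, this lies \emph{below} the soliton level $-\theta\mu^{2\be+1}$, so your Step 1 bound is not below it. Then ``escaping mass costs at least the half-line soliton energy of mass $\mu-m$'' gives, for $m=0$, only $\mc{E}_e(\mu)\ge-\theta2^{2\be}\mu^{2\be+1}$. That is compatible with $\mc{E}_e(\mu)\le-\theta\mu^{2\be+1}+C\mu$, so neither vanishing nor dichotomy is excluded. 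If you instead meant the energy of $\phi_{\mu-m}$ restricted to $\r^+$, that is not a valid lower bound: a soliton sliding to infinity along a half-line has energy tending to $-\theta(\mu-m)^{2\be+1}$. Strict subadditivity is also not automatic on the non-spherical set $\mc{H}_\mu$. The paper uses three ingredients you are missing:
(a) Lemma \ref{lem4}: mass that tends to $0$ in $L^\infty_{loc}$ costs at least the \emph{full-line} level $-\theta m^{2\be+1}$. One inserts a V-shaped piece vanishing near the vertex so that a.e.\ level is hit twice, then applies Lemma \ref{lem6} with $N=2$.
(b) Vanishing is excluded by the constraint itself. $\mc{E}(u_n,\mc{G})\le-\frac{\theta}{2}\mu^{2\be+1}$ gives $\mu\|u_n\|^{p-2}_{L^\infty(e)}\ge\|u_n\|_p^p\gtrsim\mu^{2\be+1}$, and uniform convergence on the compact edge $e$ passes this to $u$.
(c) The same bound with $\|u\|_\infty^2\le2\|u\|_2\|u'\|_2$ shows $\|u\|_2^2\ge C''\mu$. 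Moreover $w=\sqrt{\mu}\,u/\|u\|_2$ stays in $\mc{H}_\mu$, because the max-on-$e$ condition is scale invariant. Proposition \ref{pro1} then forces the lost mass to exceed $(1-\va)^{1/(2\be)}\mu$, a contradiction for small $\va$.

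\textbf{Step 3 is false for end-edges and unsupported otherwise.} If $e$ has a vertex of degree one, that tip is an energy \emph{gain}, not a cost. The minimizer's energy is then close to $-\theta2^{2\be}\mu^{2\be+1}$, below the soliton level, and it concentrates at the tip as a half-soliton. So the blow-up does not converge to $\phi_1$ on $\r$, and $x_\mu$ is not far from the vertices. At a degree-two vertex there is no gap either; the paper sets that case aside via \eqref{eq27}. For end-edges your Step 1 bound is also too weak. With only $-\theta(1-\va)\mu^{2\be+1}$ you cannot exclude the maximum also being attained on another edge, since two preimages per level give only the lower bound $-\theta\mu^{2\be+1}$. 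The sharper half-soliton bound \eqref{eq29} is needed.

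In the non-end case, the exponential smallness of $u$ away from $x_\mu$ is asserted, not proved. $H^1_{loc}$ convergence of the rescaled profile does not control $\sup u$ on other edges. You also cannot yet use the Euler--Lagrange equation globally, since $u$ might lie on the boundary of the constraint.

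The paper avoids blow-up entirely. Assuming $M$ is also attained on some $f\neq e$, it shows every level in $(\delta,M)$ has at least $2$ preimages, and at least $3$ if $e$ is not an end-edge. It handles levels below $\delta$ by gluing two edges of length $\la$ carrying linear profiles. Lemma \ref{lem6} then gives $\mc{E}(u,\mc{G})\ge-\theta(2/N)^{2\be}(\mu+\la\delta^2)^{2\be+1}-\delta^2/\la$. For large $\mu$ this contradicts \eqref{eq29} when $N=2$ and \eqref{eq28} when $N=3$.
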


This paper is organized as follows. In Section 2, we present some preliminaries. In Section 3, we show the proof of Theorem \ref{thm1}.

\section{Preliminaries}
Let 
\begin{equation*}
	\mc{H}(\mc{G}):=\{u\in \mathbb{H}: \|u\|_{L^\infty(e)}=\|u\|_{L^\infty(\mc{G})} \} 
\end{equation*}
and 
\begin{equation*}
	\mc{H}_\mu(\mc{G}):=\mc{H}(\mc{G})\cap S_{\mu}(\mc{G}),
\end{equation*}
where $e$ is a bounded edge of $\mc{G}$.
We will investigate the following minimization problem: find a function $u\in \mc{H}_\mu(\mc{G})$ such that 
\begin{equation}\label{eq8}
\mc{E}(u,\mc{G})=\inf_{v\in \mc{H}_\mu(\mc{G})}\mc{E}(v,\mc{G}).
\end{equation}
We first give two basic tools in normalization problems, i.e. Gagliardo-Nirenberg inequality and $L^\infty$ estimate.
\begin{lemma}\label{lem1}
	Let $\mc{G}$ is a noncompact metric graph and $p\in (2,6)$. Then for any $u\in H^1(\mc{G})$, there exists $C:=C(p)$ such that 
	\begin{equation}\label{eq9}
		\|u\|^p_{L^p(\mc{G})}\leq C\|u\|^{\frac{p}{2}+1}_{L^2(\mc{G})}\cdot \|u^\prime\|^{\frac{p}{2}-1}_{L^2(\mc{G})}, 
	\end{equation}
	and 
	\begin{equation}\label{eq10}
		\|u\|^2_{L^\infty(\mc{G})}\leq 2\|u\|_{L^2(\mc{G})}\cdot \|u^\prime\|_{L^2(\mc{G})}.
	\end{equation}
\end{lemma}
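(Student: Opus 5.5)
The plan is to prove the $L^\infty$ estimate \eqref{eq10} first, by a one-dimensional fundamental-theorem-of-calculus argument along a suitable path in $\mc{G}$. Then \eqref{eq9} follows by interpolation.

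\textbf{Step 1: \eqref{eq10}.} By density we may take $u\in H^1(\mc{G})$ with compact support, and it suffices to treat $|u|$ since $\||u|'\|_{L^2}=\|u'\|_{L^2}$. Fix $x_0\in\mc{G}$ with $|u(x_0)|=\|u\|_{L^\infty(\mc{G})}$ (approximately, if the supremum is not attained). The key geometric fact is that a noncompact connected graph contains a path from $x_0$ to infinity, i.e. to the end of some half-line; this holds because $\mc{G}$ is connected and has at least one unbounded edge. Choose such a path with no repeated edges, parametrized by arclength $\gamma:[0,\infty)\to\mc{G}$ with $\gamma(0)=x_0$. The function $v=u\circ\gamma$ lies in $H^1(0,\infty)$, and since $\gamma$ is injective up to finitely many points,
\[
\int_0^\infty |v'|^2\le \intg |u'|^2, \qquad \int_0^\infty |v|^2\le \intg |u|^2 .
\]
Since $v(t)\to 0$ as $t\to\infty$, we get
\[
v(0)^2=-\int_0^\infty (v^2)'\dif t\le 2\int_0^\infty |v||v'|\dif t\le 2\|v\|_{L^2}\|v'\|_{L^2}.
\]
This gives \eqref{eq10} with constant $2$. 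We do not need the sharper constant $1$ available on $\r$; the half-line bound is exactly what the statement claims, which is why $2$ appears.

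\textbf{Step 2: \eqref{eq9}.} Interpolate:
\[
\|u\|_{L^p}^p\le \|u\|_{L^\infty}^{p-2}\|u\|_{L^2}^2\le 2^{\frac{p-2}{2}}\|u\|_{L^2}^{\frac{p-2}{2}}\|u'\|_{L^2}^{\frac{p-2}{2}}\|u\|_{L^2}^2 .
\]
The exponents are $\frac{p}{2}+1$ on $\|u\|_{L^2}$ and $\frac{p}{2}-1$ on $\|u'\|_{L^2}$, which yields \eqref{eq9} with $C(p)=2^{\frac{p-2}{2}}$. The constraint $p<6$ is not needed for the inequality itself; it matters later, for coercivity. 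Finally, extend both inequalities from compactly supported functions to all of $H^1(\mc{G})$ by density together with continuity of both sides in the $H^1$ norm.

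\textbf{Main obstacle.} The only nontrivial point is the existence of the path to infinity that traverses each point of $\mc{G}$ at most once, so that the $L^2$ norms along the path are dominated by those on $\mc{G}$. I would construct it by taking a shortest path in the combinatorial graph from the edge containing $x_0$ to a vertex incident to a half-line, and then appending that half-line. A shortest path repeats no edges, and the only possible overlap is with the initial edge containing $x_0$. That overlap is avoided by starting from $x_0$ in the direction of the endpoint of its edge that is nearer along the chosen route.
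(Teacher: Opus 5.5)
Your proof is correct, but it takes a different route from the paper. The paper gives no argument of its own. It cites Tentarelli and Adami--Serra--Tilli, where the standard proof passes to the decreasing rearrangement $u^\star$ on $\r^+$, the same tool used in Lemma \ref{lem6}. For connected noncompact $\mc{G}$, every value in $(0,\|u\|_{L^\infty})$ is attained at least once. Hence $\|(u^\star)'\|_{L^2(\r^+)}\le\|u'\|_{L^2(\mc{G})}$, all $L^r$ norms are preserved, and both \eqref{eq9} and \eqref{eq10} are inherited from the half-line.

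You instead restrict $u$ to one a.e.\ injective path from a maximum point to infinity and apply the fundamental theorem of calculus along it. You then obtain \eqref{eq9} by the interpolation $\|u\|_p^p\le\|u\|_\infty^{p-2}\|u\|_2^2$.

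Comparing the two:
\begin{itemize}
\item Your route is fully elementary: it needs no P\'olya--Szeg\H{o} inequality on graphs and no level-set counting.
\item It gives \eqref{eq10} with the same constant $2$.
\item Your shortest-path construction does yield an a.e.\ injective path. Start toward whichever endpoint of the edge of $x_0$ begins a shortest route to a half-line; if $x_0$ already lies on a half-line, simply go outward.
\item The rearrangement route gives \eqref{eq9} directly with the optimal half-line constant, and reduces all such estimates uniformly to $\r^+$. Your constant $C(p)=2^{(p-2)/2}$ is not sharp, but the lemma only asks for some $C(p)$.
\end{itemize}

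You are right to invoke connectedness explicitly. The statement omits it, but \eqref{eq9} fails on any graph with a compact component, so it must be assumed. Your density step is harmless but unnecessary, since $v=u\circ\gamma\in H^1(0,\infty)$ already forces $v(t)\to 0$.
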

Note that inequality \eqref{eq9} cannot hold if $\mc{G}$ is a compact graph (take a constant function as a counterexample). We refer to Proposition 4.1 in \cite{Ten2016} and Proposition 2.1 in \cite{AST2016} for detailed proofs.

Next, we recall the definition of \textit{soliton}. Clearly, the energy functional \eqref{eq3} can be obtained by adding the influence term describing the external potential $W$ via the standard NLS energy functional 
\begin{equation}\label{eq5}
\mc{E}_{\mathrm{NLS}}(u,\mc{G}):=\dfrac{1}{2}\intg |u^\prime (x)|^2\dif x-\dfrac{1}{p}\intg |u(x)|^p \dif x.
\end{equation}
If $\mc{G}=\r$, then the ground states of \eqref{eq5} under the mass constrain is called \textit{soliton}. 
We denoted by $\phi_{\mu}$ the soliton
\[\phi_{\mu}(x)=\mu^{\alpha}C_p \text{sech}^{\frac{\alpha}{\beta}}\left(C^\prime_p \mu^{\beta}x\right), \]
where $C_p$ and $C^\prime_p $ are positive constants and 
 $$\al=\frac{2}{6-p},\qquad \be=\frac{p-2}{6-p}.$$
The function $\phi_{\mu}$ is the unique (up to translations) positive ground state at mass $\mu$ of $\mc{E}_{\mathrm{NLS}}(u,\r)$ (see \cite{BL}).
Similarly, when $\mc{G}=\r^+$, the unique ground state of \eqref{eq5} under the mass constrain is called ``\textit{half soliton}" (i.e. a soliton of mass $2\mu$ on $\r$) and we refer to \cite{Noja2014} for more details.


\begin{lemma}\label{lem2}
	Fix $2<p<6$. Then there exists a constant $\theta>0$ which depend on only $p$ such that 
	\begin{equation}\label{eq11}
		\min_{u\in S_{\mu}(\r)}\mc{E}_{\mathrm{NLS}}(u,\r)=\mc{E}_{\mathrm{NLS}}(\phi_{\mu},\r)=-\theta \mu^{2\be+1}, 
	\end{equation}
	and 
	\begin{equation}\label{eq12}
		\min_{u\in S_{\mu}(\r^+)}\mc{E}_{\mathrm{NLS}}(u,\r^+)=-\theta 2^{2\be}\mu^{2\be+1}.
	\end{equation}
	Moreover, for any noncompact metric graph $\mc{G}$, it was proved in \cite{AST2015} that 
	\begin{equation}\label{eq13}
		-\theta 2^{2\be}\mu^{2\be+1}\leq \inf_{u\in S_\mu(\mc{G})}\mc{E}_{\mathrm{NLS}}(u,\mc{G})\leq -\theta \mu^{2\be+1}.
	\end{equation}
\end{lemma}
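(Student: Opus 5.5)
The plan is to reduce everything to scaling on $\r$ and to rearrangement arguments, treating \eqref{eq11} as the basic case and deriving \eqref{eq12} and \eqref{eq13} from it.

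\textbf{Step 1: scaling on $\r$, proof of \eqref{eq11}.} For $u\in S_1(\r)$ set $u_\mu(x):=\mu^{\al}u(\mu^{\be}x)$. A direct change of variables gives
\[
\|u_\mu\|_{L^2}^2=\mu^{2\al-\be}\|u\|_{L^2}^2,\qquad \|u_\mu'\|_{L^2}^2=\mu^{2\al+\be}\|u'\|_{L^2}^2,\qquad \|u_\mu\|_{L^p}^p=\mu^{p\al-\be}\|u\|_{L^p}^p .
\]
With $\al=\frac{2}{6-p}$ and $\be=\frac{p-2}{6-p}$ one checks that $2\al-\be=1$ and $2\al+\be=p\al-\be=\frac{p+2}{6-p}=2\be+1$. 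Hence $u\mapsto u_\mu$ is a bijection $S_1(\r)\to S_\mu(\r)$ with
\[
\mc{E}_{\mathrm{NLS}}(u_\mu,\r)=\mu^{2\be+1}\,\mc{E}_{\mathrm{NLS}}(u,\r).
\]
The infimum on $S_1(\r)$ is attained by the soliton $\phi_1$; this is the classical result of \cite{BL}, which also gives uniqueness up to translation. I would then define $\theta:=-\mc{E}_{\mathrm{NLS}}(\phi_1,\r)$.

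To see that $\theta>0$, use the mass-preserving dilation $v_\la(x)=\sqrt{\la}\,v(\la x)$. It multiplies the kinetic term by $\la^2$ and the $L^p$ term by $\la^{p/2-1}$. Since $p/2-1<2$, small $\la>0$ makes the energy negative. Finally, scaling $\phi_1$ gives $\phi_\mu=(\phi_1)_\mu$ and the value $-\theta\mu^{2\be+1}$.

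\textbf{Step 2: the half-line, proof of \eqref{eq12}.} Even reflection maps $S_\mu(\r^+)$ bijectively onto the even functions in $S_{2\mu}(\r)$, and it exactly doubles $\mc{E}_{\mathrm{NLS}}$. By symmetric decreasing rearrangement (P\'olya--Szeg\H{o}), the infimum over $S_{2\mu}(\r)$ equals the infimum over even functions; indeed the minimizer $\phi_{2\mu}$ is itself even. Therefore
\[
\min_{S_\mu(\r^+)}\mc{E}_{\mathrm{NLS}}(\cdot,\r^+)=\tfrac12\bigl(-\theta(2\mu)^{2\be+1}\bigr)=-\theta\,2^{2\be}\mu^{2\be+1}.
\]

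\textbf{Step 3: general noncompact $\mc{G}$, proof of \eqref{eq13}.}

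For the upper bound, take a halfline $e_0$ and place on it $\chi_R\,\phi_\mu(\cdot-R)$, where $\chi_R$ is a cutoff supported in $[1,2R]$ and equal to $1$ on $[2,2R-1]$. Extend this function by zero to the rest of $\mc{G}$ and rescale it to mass $\mu$. By the exponential decay of $\phi_\mu$, the energy tends to $-\theta\mu^{2\be+1}$ as $R\to\infty$, which gives the upper bound on the infimum.

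For the lower bound, take $u\in S_\mu(\mc{G})$; we may assume $u\ge 0$, since replacing $u$ by $|u|$ changes nothing in the energy. Let $u^*$ be its decreasing rearrangement on $\r^+$. Then $u^*$ has the same $L^2$ and $L^p$ norms as $u$. I would show $\|(u^*)'\|_{L^2}\le\|u'\|_{L^2}$, after which \eqref{eq12} gives $\mc{E}_{\mathrm{NLS}}(u,\mc{G})\ge -\theta 2^{2\be}\mu^{2\be+1}$.

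\textbf{Main obstacle.} The key step is this P\'olya--Szeg\H{o} inequality on $\mc{G}$. The approach is the coarea formula combined with Cauchy--Schwarz on level sets. The essential input is that for a.e. $t\in(0,\max u)$ the level set $\{u=t\}$ has at least one point. This holds because $\mc{G}$ is connected and $u\to0$ along the halfline, so $u$ takes every such value $t$. This is exactly the argument of \cite{AST2015}, and I would follow it.
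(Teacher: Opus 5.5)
Your proposal is correct and follows essentially the same route the paper relies on: the paper gives no proof of this lemma and simply cites \cite{BL} for \eqref{eq11}, the half-soliton for \eqref{eq12}, and \cite{AST2015} for \eqref{eq13}. Your scaling/Berestycki--Lions step, even reflection to the half-line, and the escaping-soliton plus P\'olya--Szeg\H{o} rearrangement argument on $\mc{G}$ are exactly the standard arguments behind those citations, in particular the proof in \cite{AST2015}.
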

\begin{lemma}\label{lem3}
	$\inf_{v\in \mc{H}_\mu(\mc{G})}\mc{E}(v,\mc{G})$ is well defined. Moreover, if $\{u_n\}\subset \mc{H}_\mu(\mc{G})$ is a minimizing sequence of $\mc{E}(v,\mc{G})$, then there exist a subsequence of $\{u_n\}$ (still denote $\{u_n\}$) and $u$ such that $u_n\rhp u$ in $\mc{H}(\mc{G})$ as $n\to\infty$.
\end{lemma}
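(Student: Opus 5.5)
Lemma \ref{lem3} makes two claims. First, $\mc{H}_\mu(\mc{G})$ is nonempty and $\mc{E}(\cdot,\mc{G})$ is bounded from below on it. Second, minimizing sequences are bounded in $\mathbb{H}$, so we can extract a weakly convergent subsequence. Throughout I use the normalization $W_\infty=0$ of Remark \ref{rem1}. By $(W_1)$ this gives $W\le 0$, and by $(W_2)$ we have $W_-\le W\le 0$. Hence
\[
\Big|\intg W|u|^2\dif x\Big|\le |W_-|\,\mu \quad\text{for every } u\in S_\mu(\mc{G}),
\]
so the potential term is harmless, and the $\mathbb{H}$-norm is controlled by $\|u'\|_{L^2}^2+\|u\|_{L^2}^2$ on $S_\mu(\mc{G})$.

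For nonemptiness, take the bounded edge $e$ and a smooth nonnegative function supported in the interior of $I_e$, extended by zero to the rest of $\mc{G}$. This function is continuous, belongs to $H^1(\mc{G})$, and attains its maximum on $e$. After scaling it has mass $\mu$, so it lies in $\mc{H}_\mu(\mc{G})$, and the infimum is taken over a nonempty set.

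For the lower bound, apply the Gagliardo--Nirenberg inequality \eqref{eq9} of Lemma \ref{lem1}. For $u\in\mc{H}_\mu(\mc{G})$,
\[
\mc{E}(u,\mc{G})\ \ge\ \frac12\|u'\|_{L^2}^2-\frac12|W_-|\mu-\frac{C}{p}\,\mu^{\frac p4+\frac12}\,\|u'\|_{L^2}^{\frac p2-1}.
\]
Since $2<p<6$, the exponent satisfies $\frac p2-1<2$. The right-hand side is therefore a function of $t=\|u'\|_{L^2}$ that is bounded below and tends to $+\infty$ as $t\to\infty$. So the infimum is finite, i.e. well defined.

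The same coercive estimate gives the compactness. If $\{u_n\}$ is a minimizing sequence, then $\mc{E}(u_n,\mc{G})$ is bounded, hence $\|u_n'\|_{L^2}$ is bounded. Together with $\|u_n\|_{L^2}^2=\mu$ and the bound on $W$, this gives boundedness of $\{u_n\}$ in $H^1(\mc{G})$ and in $\mathbb{H}$. A Hilbert space is reflexive, so a subsequence converges weakly, $u_n\rhp u$ in $\mathbb{H}$. Since there are finitely many edges, we may also assume locally uniform convergence on each edge by compact embedding on bounded intervals.

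The only point needing care is the meaning of ``in $\mc{H}(\mc{G})$'', since weak limits need not preserve the $L^\infty$ constraint. I would read the claim as weak convergence in $\mathbb{H}$, and check that $u$ lies in $\mc{H}(\mc{G})$ as follows.
\begin{itemize}
\item Uniform convergence on the compact edge $e$ gives $\|u_n\|_{L^\infty(e)}\to\|u\|_{L^\infty(e)}$.
\item Local uniform convergence on any compact subset $K\subset\mc{G}$ gives $\|u\|_{L^\infty(K)}\le\liminf_n\|u_n\|_{L^\infty(\mc{G})}=\lim_n\|u_n\|_{L^\infty(e)}=\|u\|_{L^\infty(e)}$.
\item Exhausting $\mc{G}$ by such compact sets $K$ yields $\|u\|_{L^\infty(\mc{G})}=\|u\|_{L^\infty(e)}$, so $u\in\mc{H}(\mc{G})$.
\end{itemize}
The mass of $u$ may drop below $\mu$, but that is the subject of the subsequent lemmas.
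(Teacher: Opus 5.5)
Your argument is correct. Its compactness half matches the paper's: Gagliardo--Nirenberg \eqref{eq9} bounds $\|u_n'\|_{L^2(\mc{G})}$, and weak compactness follows. Where you differ is in proving that the infimum is finite.

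The paper combines $(W_2)$ with the graph bound \eqref{eq13} of Lemma~\ref{lem2}, imported from \cite{AST2015}. This gives the explicit estimate $\mc{E}(u,\mc{G})\ge -\theta 2^{2\be}\mu^{2\be+1}+\frac{W_-}{2}\mu$. You instead let the single coercive inequality from \eqref{eq9} do both jobs. That is self-contained given Lemma~\ref{lem1}, and after optimizing in $t$ it still gives a lower bound of order $\mu^{2\be+1}$. It only lacks the sharp constant $\theta 2^{2\be}$, which this lemma does not need.

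Your version is also more careful in three places:
\begin{itemize}
\item You check $\mc{H}_\mu(\mc{G})\neq\emptyset$, which the paper takes for granted.
\item You get $H^1$-boundedness directly from $\|u_n\|_{H^1(\mc{G})}^2=\|u_n'\|_{L^2(\mc{G})}^2+\mu$. The paper instead closes with a two-sided comparison of $\|u_n\|_{H^1(\mc{G})}^2$ with $\mc{E}_{\mathrm{NLS}}(u_n,\mc{G})$, which cannot hold in general since the latter may be negative.
\item You verify that the weak limit satisfies $\|u\|_{L^\infty(\mc{G})}=\|u\|_{L^\infty(e)}$. The paper uses exactly this argument but defers it to the proof of Proposition~\ref{pro2}. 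Since $\mc{H}(\mc{G})$ is not a linear space, this check is what gives the phrase ``$u_n\rhp u$ in $\mc{H}(\mc{G})$'' a meaning.
\end{itemize}
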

\begin{proof}
	Fix $u\in \mc{H}_\mu(\mc{G})\subset S_{\mu}(\mc{G})$. According to $(W_2)$ and \eqref{eq13} of Lemma \ref{lem2}, we have 
	\begin{align*}
		\mc{E}(u,\mc{G})&=\dfrac{1}{2}\intg |u^\prime (x)|^2+W(x)|u|^2 \dif x-\dfrac{1}{p}\intg |u(x)|^p \dif x\\
		&\geq \dfrac{1}{2}\intg |u^\prime (x)|^2\dif x+\dfrac{W_{-}}{2}\mu-\dfrac{1}{p}\intg |u(x)|^p \dif x\\
		&=\mc{E}_{\mathrm{NLS}}(u,\mc{G})+\dfrac{W_{-}}{2}\mu\\
		&\geq-\theta 2^{2\be}\mu^{2\be+1}+\dfrac{W_{-}}{2}\mu\\
		&>-\infty.
	\end{align*}
	Let $\{u_n\}\subset \mc{H}_\mu(\mc{G})$ is a minimizing sequence of $\mc{E}(v,\mc{G})$. By \eqref{eq9} and \eqref{eq10} of Lemma \ref{lem1}, we deduce that
	\begin{align*}
		\mc{E}(u_n,\mc{G})&=\dfrac{1}{2}\intg |u_n^\prime (x)|^2+W(x)|u_n|^2 \dif x-\dfrac{1}{p}\intg |u_n(x)|^p \dif x\\
		&\geq \dfrac{1}{2}\intg |u_n^\prime (x)|^2+W(x)|u_n|^2 \dif x-C\|u_n\|^{\frac{p}{2}+1}_{L^2(\mc{G})}\cdot \|u_n^\prime\|^{\frac{p}{2}-1}_{L^2(\mc{G})}\\
		&\geq \dfrac{1}{2}\|u_n^\prime\|^2_{L^2(\mc{G})}+\dfrac{W_{-}}{2}\mu-C\|u_n\|^{\frac{p}{2}+1}_{L^2(\mc{G})}\cdot \|u_n^\prime\|^{\frac{p}{2}-1}_{L^2(\mc{G})}\\
		&=\dfrac{1}{2}\|u_n^\prime\|^2_{L^2(\mc{G})}+\dfrac{W_{-}}{2}\mu-C\mu^{\frac{p}{4}+\frac{1}{2}}\|u^\prime_n\|^{\frac{p}{2}-1}_{L^2(\mc{G})}.
	\end{align*}
	Since $p\in (2,6)$, we obtain $\{u^\prime_n\}$ is bounded in $L^2(\mc{G})$. On the other hand, by $(W_1)$ and $(W_2)$, we see 
	\[\min\{1, W_{-}\}\mc{E}_{\mathrm{NLS}}(u_n,\mc{G}) \leq \|u_n\|^2_{H^1(\mc{G})}\leq \max\{ 1,W_\infty\} \mc{E}_{\mathrm{NLS}}(u_n,\mc{G}).\]
	Thus, the sequence $\{u_n\}$ is bounded in $H^1(\mc{G})$ and then weakly compact by Banach-Alaoglu's Theorem.	
\end{proof}

\begin{lemma}\label{lem6}
	Let $u\in\mathbb{H}$ and $u_e\geq 0$ for all $e\in E$. If $\mathrm{card}(u^{-1}(t))\geq N$ for almost every $t$ in the range of $u$, more precisely,
	\begin{equation}\label{eq17}
		\mathrm{card}\{x\in \mc{G}:\; u(x)=t \}\geq N\quad \text{for all}\quad a.e. \;t\in (0,\|u\|_{L^\infty(\mc{G})}), 
	\end{equation}
	then we have
	\begin{equation}\label{eq16}
		\mc{E}(u,\mc{G})\geq -\theta \left(\dfrac{2}{N}\right)^{2\be} \|u\|^{4\be+2}_{L^2(\mc{G})}.
	\end{equation}
\end{lemma}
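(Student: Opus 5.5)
The plan is to reduce \eqref{eq16} to a lower bound for $\mc{E}_{\mathrm{NLS}}$ and then prove that bound by a symmetric decreasing rearrangement onto $\r$. Throughout, set $m:=\|u\|^2_{L^2(\mc{G})}$, so the target right-hand side is $-\theta (2/N)^{2\be} m^{2\be+1}$.

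\textbf{Step 1 (the potential term, the main obstacle).} I need $\mc{E}(u,\mc{G})\ge \mc{E}_{\mathrm{NLS}}(u,\mc{G})$, i.e. $\intg W|u|^2\dif x\ge 0$. This does \emph{not} follow from $(W_1)$–$(W_2)$ alone. I would use $(W_1)$ in the paper's original, unshifted normalization, where the potential is written so that $W\ge 0$. If Remark \ref{rem1} is in force, the shifted $\widehat W=W-W_\infty$ is $\le 0$, so under that convention this step fails. In that case the argument below only gives \eqref{eq16} for $\mc{E}_{\mathrm{NLS}}$, or with the extra term $+\frac{W_-}{2}m$ on the right, as in the proof of Lemma \ref{lem3}. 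So this step rests on an assumption ($W\ge 0$) that I have not checked against the paper's conventions. Granting it, it remains to prove the bound for $\mc{E}_{\mathrm{NLS}}(u,\mc{G})$.

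\textbf{Step 2 (rearrangement).} Let $\hat u$ be the symmetric decreasing rearrangement of $u$ on $\r$, defined by $|\{\hat u>t\}|=|\{u>t\}|$ for all $t>0$. This preserves every $L^q$ norm, so $\|\hat u\|^2_{L^2(\r)}=m$ and $\|\hat u\|_{L^p(\r)}=\|u\|_{L^p(\mc{G})}$. For the gradient I follow the Pólya–Szegő argument from \cite{AST2015}.
\begin{itemize}
\item By the coarea formula, $\intg|u'|^2\dif x=\int_0^{\|u\|_\infty}\sum_{u(x)=t}|u'(x)|\dif t$.
\item The distribution function has derivative $-\rho'(t)=\sum_{u(x)=t}1/|u'(x)|$.
\item By Cauchy–Schwarz, $\sum|u'|\ge \bigl(\mathrm{card}\,u^{-1}(t)\bigr)^2\big/\sum 1/|u'|$.
\end{itemize}
The level sets of $\hat u$ have exactly $2$ points, while by \eqref{eq17} those of $u$ have at least $N$ points for a.e. $t$. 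Comparing the two gives
\[
\intg|u'|^2\dif x\ \ge\ \Bigl(\tfrac N2\Bigr)^2\int_\r|\hat u'|^2\dif x .
\]

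\textbf{Step 3 (scaling).} Writing $A:=\|\hat u'\|^2_{L^2(\r)}$ and $B:=\|\hat u\|^p_{L^p(\r)}$, Steps 1–2 give $\mc{E}(u,\mc{G})\ge \frac{\kappa}{2}A-\frac1pB$ with $\kappa=(N/2)^2$. It then suffices to compute $\inf\{\frac\kappa2\|v'\|^2-\frac1p\|v\|_p^p : \|v\|_2^2=m\}$ on $\r$. With $v(x)=a\,w(bx)$ one has $\|v\|_2^2=a^2m/b$, $\|v'\|_2^2=a^2bA$ and $\|v\|_p^p=a^pB/b$. Choosing $a,b$ to preserve the mass and make the kinetic and potential parts scale by the same factor turns the functional into a multiple of $\mc{E}_{\mathrm{NLS}}(\cdot,\r)$ at mass $m$. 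By \eqref{eq11} the infimum is then $-\theta\,\kappa^{-\be}m^{2\be+1}$; a quick check with $\kappa=1$ recovers \eqref{eq11}. Since $\kappa^{-\be}=(2/N)^{2\be}$ and $m^{2\be+1}=\|u\|_{L^2(\mc{G})}^{4\be+2}$, this is exactly \eqref{eq16}.
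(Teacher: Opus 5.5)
Your Steps 2 and 3 are correct and are, up to cosmetic changes, the paper's argument. The paper takes the one-sided decreasing rearrangement $u^\star$ on $\mathbb{R}^+$ and quotes Duff for $\|(u^\star)'\|^2_{L^2(\mathbb{R}^+)}\le N^{-2}\|u'\|^2_{L^2(\mathcal{G})}$. As printed, \eqref{eq18} puts $1/N^2$ in front of the unsquared norm, but \eqref{eq21} uses the correct squared form. It then rescales to $v(x)=u^\star(Nx)$, which has mass $m/N$, and finishes with the half-soliton bound \eqref{eq12} applied to $v$. You instead use the symmetric rearrangement, which is the even reflection of $u^\star$; this is why your factor is $(N/2)^2$ rather than $N^2$. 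You also derive the improved P\'olya--Szeg\H{o} inequality yourself by coarea and Cauchy--Schwarz instead of citing Duff, and you reduce to the full-line bound \eqref{eq11} by a mass-preserving rescaling. Your bookkeeping giving $-\theta\kappa^{-\beta}m^{2\beta+1}$ is right. One small inaccuracy: in general only $-\rho'(t)\ge\sum_{u(x)=t}1/|u'(x)|$ holds for a.e. $t$, not equality, but that is exactly the direction your Cauchy--Schwarz step needs.

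The genuine gap is the one you flagged in Step 1, and it cannot be closed. The paper does not close it either. It asserts $\mathcal{E}(u,\mathcal{G})\ge N\,\mathcal{E}_{\mathrm{NLS}}(v,\mathbb{R}^+)$ citing $(W_2)$, but $(W_2)$ only gives $\frac12\int_{\mathcal{G}}W|u|^2\,\mathrm{d}x\ge\frac{W_-}{2}\|u\|^2_{L^2(\mathcal{G})}$.

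Your proposed way out is not available either. $(W_1)$ says $\|W\|_{L^\infty(\mathcal{G})}=W_\infty$, i.e. $-W_\infty\le W\le W_\infty$, so $W\ge 0$ is not assumed even before the shift. After the shift of Remark~\ref{rem1}, which is the normalization in force when the lemma is applied in Lemma~\ref{lem4} and Proposition~\ref{pro3}, one gets $\widehat W\le 0$.

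With that sign the inequality is actually false. On $\mathcal{G}=\mathbb{R}$, take $N=2$ and $u=\phi_m$; every level set $\{u=t\}$ with $0<t<\|\phi_m\|_{L^\infty}$ has exactly two points, so the hypothesis holds. Then $\mathcal{E}(\phi_m,\mathbb{R})=-\theta m^{2\beta+1}+\frac12\int_{\mathbb{R}}\widehat W\phi_m^2\,\mathrm{d}x<-\theta m^{2\beta+1}$ whenever $\widehat W\not\equiv 0$.

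So what your argument (and the paper's) really proves is \eqref{eq16} with $\mathcal{E}_{\mathrm{NLS}}(u,\mathcal{G})$ on the left. Consequently $\mathcal{E}(u,\mathcal{G})\ge-\theta(2/N)^{2\beta}\|u\|^{4\beta+2}_{L^2(\mathcal{G})}+\frac{W_-}{2}\|u\|^2_{L^2(\mathcal{G})}$. \eqref{eq16} as stated holds only under the extra hypothesis $W\ge 0$. You should state the lemma in one of these forms rather than leave Step 1 conditional on an unverified sign assumption.
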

\begin{proof}
	Let $u^\star\in H^1(\r^+)$ denotes the decreasing rearrangement of $u\geq 0$ on the positive halfline. It follows from \cite{AST2015} that 
	\begin{equation}\label{eq14}
	\|u^\star\|_{L^r(\r^+)}=\|u\|_{L^r(\mc{G})}\quad \text{for all}\;\;r\geq 1
	\end{equation} 
	and 
	\begin{equation}\label{eq15}
	\|(u^\star)^\prime \|_{L^2(\r^+)}\leq \|u^\prime \|_{L^2(\mc{G})}.
	\end{equation}
	Since \eqref{eq17} holds, it follows from \cite{Duff1970} that the inequality \eqref{eq15} has a more stronger form
	\begin{equation}\label{eq18}
		\|(u^\star)^\prime \|_{L^2(\r^+)}\leq \dfrac{1}{N^2}\|u^\prime \|_{L^2(\mc{G})}.
	\end{equation}
	Let $v(x)=u^{\star}(Nx)\in H^1(\r^+)$. Combining \eqref{eq14} and \eqref{eq18}, we have
	\begin{equation}\label{eq19}
		\|v\|^r_{L^r(\r^+)}=\dfrac{1}{N}\|u^\star\|^r_{L^r(\r^+)}= \dfrac{1}{N}\|u\|^r_{L^r(\mc{G})}\quad \text{for all}\quad r\geq 1
	\end{equation}
	and 
	\begin{equation}\label{eq21}
		\|v^\prime\|^2_{L^2(\r^+)}=N\|(u^\star)^\prime \|^2_{L^2(\r^+)}\leq \dfrac{1}{N}\|u^\prime \|^2_{L^2(\mc{G})}.
	\end{equation}
	In particular, $v\in S_{\frac{1}{N}\|u\|^2_{L^2(\mc{G})}}(\r^+)$.
	By \eqref{eq12} of Lemma \ref{lem2}, we see
	\begin{equation}\label{eq20}
		\mc{E}_{\mathrm{NLS}}(v,\r^+)\geq -\theta 2^{2\be}\left(\dfrac{1}{N} \right)^{2\be+1}\|u\|^{4\be+2}_{L^2(\mc{G})}.
	\end{equation}
	Therefore, by \eqref{eq19}, \eqref{eq21}, \eqref{eq20} and $(W_2)$, we obtain
	\begin{align*}
		\mc{E}(u,\mc{G})
		&\geq N\mc{E}_{\mathrm{NLS}}(v,\r^+)
		\geq -\theta \left(\dfrac{2}{N}\right)^{2\be} \|u\|^{4\be+2}_{L^2(\mc{G})}.
	\end{align*}
	This  complete the proof.
\end{proof}
\begin{lemma}\label{lem4}
	Assume $\{u_n\}\subset \mathbb{H}$ satisfies $u_n\to 0$ in $L^\infty_{loc}(\mc{G})$ and $\|u_n\|^2_{L^2(\mc{G})}\to m$ as $n\to \infty$. Then 
	\begin{equation}\label{eq22}
		\liminf\limits_{n\to\infty}\mc{E}(u_n,\mc{G})\geq -\theta m^{2\be+1},
	\end{equation}
	where $m\in (0,+\infty)$.
\end{lemma}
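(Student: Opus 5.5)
We prove Lemma \ref{lem4} by splitting the energy into the potential part and the pure NLS part, and controlling each separately. By Remark \ref{rem1} we assume $W_\infty=0$, so that $W\le 0$ on $\mc{G}$ and $W_e(x)\to 0$ along every unbounded edge. Write
\[
\mc{E}(u_n,\mc{G})=\mc{E}_{\mathrm{NLS}}(u_n,\mc{G})+\dfrac12\intg W|u_n|^2\dif x .
\]
The first step is to show that the potential term tends to zero. Fix $\va>0$. By $(W_1)$ there is a compact set $K\subset\mc{G}$, consisting of all bounded edges together with initial segments $[0,R]$ of the halflines, such that $|W|\le\va$ on $\mc{G}\setminus K$. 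On $K$ we use $(W_2)$ and the assumption $u_n\to 0$ in $L^\infty_{loc}(\mc{G})$:
\[
\int_K |W||u_n|^2\le \max\{|W_-|,\va\}\,|K|\,\|u_n\|^2_{L^\infty(K)}\to 0 .
\]
Off $K$ the integral is at most $\va\|u_n\|^2_{L^2}\le \va(m+1)$ for large $n$. Since $\va$ is arbitrary, $\intg W|u_n|^2\to 0$, and therefore
\[
\liminf_n\mc{E}(u_n,\mc{G})=\liminf_n\mc{E}_{\mathrm{NLS}}(u_n,\mc{G}).
\]

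The second step, which is the key one, is to show that the NLS energy along such a sequence is asymptotically bounded below by the soliton level $-\theta m^{2\be+1}$ rather than the half-soliton level of \eqref{eq13}. The idea is that mass escaping to infinity along halflines "sees" a line, not a halfline. Choose cut-offs $\chi_R$ that equal $1$ outside $K_{2R}$, vanish on $K_R$, and have $|\chi_R'|\le 1/R$. Set $w_n=\chi_R u_n$. Since $u_n\to0$ uniformly on $K_{2R}$ and $\{u_n\}$ is bounded in $H^1$ (otherwise $\mc{E}_{\mathrm{NLS}}(u_n)\to+\infty$ by \eqref{eq9} and the conclusion is trivial; so pass to a subsequence realizing the liminf with bounded $H^1$ norm), we obtain the following as $n\to\infty$, for fixed $R$:
\begin{align*}
\|w_n\|_{L^p}^p&=\|u_n\|^p_{L^p}+o(1),\\
\|w_n\|^2_{L^2}&=m+o(1),\\
\|w_n'\|^2_{L^2}&\le \|u_n'\|^2_{L^2}+O(1/R)+o(1).
\end{align*}
For the last estimate, the cross terms are controlled by $\|u_n\|_{L^\infty(K_{2R})}\to 0$ times bounded quantities. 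The function $w_n$ is supported on finitely many disjoint halflines $[R,\infty)$, and on each it vanishes at the endpoint. Extend each piece by odd reflection, or simply view it as a function on $\r$ vanishing on $(-\infty,R]$; this is an $H^1(\r)$ function with the same norms. Summing over halflines and using \eqref{eq11} on each piece, with the superadditivity $\sum m_j^{2\be+1}\le(\sum m_j)^{2\be+1}$, gives
\[
\mc{E}_{\mathrm{NLS}}(w_n,\mc{G})\ge -\theta\|w_n\|_{L^2}^{4\be+2}.
\]
Here we use $\mc{E}_{\mathrm{NLS}}(v,\r)\ge -\theta\|v\|_{L^2}^{4\be+2}$ by scaling of Lemma \ref{lem2}. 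Combining, we get $\liminf_n \mc{E}_{\mathrm{NLS}}(u_n)\ge -\theta m^{2\be+1}-C/R$. Letting $R\to\infty$ yields \eqref{eq22}.

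The main obstacle is the bookkeeping in the cut-off step: making sure the error terms are $o(1)$ as $n\to\infty$ for fixed $R$, uniformly handled, before sending $R\to\infty$. This relies on the $H^1$ bound and on uniform vanishing on the compact core $K_{2R}$. The rest reduces to Lemma \ref{lem2} on $\r$.
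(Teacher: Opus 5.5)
Your proof is correct, and it takes a genuinely different route from the paper. The paper replaces $u_n$ by $|u_n|$ and splits according to where $M_n=\max u_n$ is attained. If the maximum is on the compact core, then $M_n\to0$ and $\frac1p\intg|u_n|^p\le\frac{M_n^{p-2}}{p}\|u_n\|^2_{L^2}\to0$, so the liminf is nonnegative. If it is on a halfline $e_0$, the paper inserts a V-shaped piece $|x-\delta_n|$ on $[0,2\delta_n]$, with $\delta_n=u_n(0)\to0$, and shifts the rest of $e_0$. The new function then takes every value of $(0,M_n)$ at least twice, so the rearrangement bound of Lemma \ref{lem6} with $N=2$ applies, and the paper checks that the modification (including the shift in $W$) costs only $o(1)$.

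You avoid both the case analysis and the rearrangement machinery. You first show $\intg W|u_n|^2\to0$. You then cut off the compact core, where $u_n\to0$ uniformly. Each remaining halfline piece vanishes at its endpoint, so it extends by zero to an $H^1(\r)$ function with the same norms. The full-line soliton bound \eqref{eq11} and superadditivity of $t\mapsto t^{2\be+1}$ then give the soliton level directly.

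Your argument makes transparent why the constant is $\theta$ rather than the half-line constant $\theta 2^{2\be}$. It also keeps $W$ out of the lower bound altogether. In the paper's route the potential is carried through Lemma \ref{lem6}, whose inequality is really a statement about $\mc{E}_{\mathrm{NLS}}$: after Remark \ref{rem1} one has $W\le 0$, so $\frac12\intg W|u|^2$ does not help there. Your observation that the potential term vanishes is exactly what makes that step clean. The paper's approach, in turn, has the merit of using the same preimage-counting tool it needs again in Proposition \ref{pro3}.

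Two minor points. First, the odd reflection you offer as an alternative would double mass and energy and yield only $-\theta 2^{2\be}m_j^{2\be+1}$, the half-soliton level. It is the zero extension (to which your ``same norms'' refers) that is needed. Second, $u_n\to0$ uniformly on $K_{2R}$ and $\{u_n\}$ is bounded in $H^1$. Hence both the cross term and $\intg|\chi_R'|^2|u_n|^2\le R^{-1}\,\#\{\text{halflines}\}\,\|u_n\|^2_{L^\infty(K_{2R})}$ are $o(1)$ for each fixed $R$. So the $O(1/R)$ error and the final limit $R\to\infty$ are not actually needed.
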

\begin{proof}
	Since $\mc{E}(u_n,\mc{G})=\mc{E}(|u_n|,\mc{G})$, we may assume that $u_n\geq 0$. Let 
	\[M_n:=\max_{x\in \mc{G}}u_n(x). \]
	{\bf Case 1.} There exists $e_0\in E$ such that $M_n$ is attained in $e_0$ where $e_0\sim I_{e_0}=[0,+\infty)$ is a halfline. In this case, we set $\delta_n:=u_n(0)$ and define the modified function $\widetilde{u}_n$ on $e_0$ by $\widetilde{u}_n(x)=|x-\delta_n|$ on $x\in [0,2\delta_n]$ and $\widetilde{u}_n(x)=u_n(x-2\delta_n)$ on $x\in (2\delta_n,+\infty)$. Thus, $\widetilde{u}_n\in H^1(e_0)$ and $\widetilde{u}_n(0)=u_n(0)=\delta_n$. Let us define 
	\begin{align}\label{eq23}
	 \widetilde{u}_n(x):=\left\{\begin{array}{ll}
	 |x-\delta_n|&\quad \text{if} \quad x\in [0,2\delta_n] \\
	 u_n(x-2\delta_n) &\quad \text{if} \quad x\in (2\delta_n,+\infty) \\
	 u_n(x)&\quad \text{if} \quad x\in \mc{G}\setminus {e_0}.
	 \end{array} \right.
	 \end{align}
	 Therefore $\widetilde{u}_n\in\mathbb{H}$ and $M_n=\max_{x\in e_0}\widetilde{u}_n(x)$. On the other hand, it follows from \eqref{eq23} that $\widetilde{u}_n(\delta_n)=0$ and $\widetilde{u}_n(x)=u_n(x-2\delta_n)\to 0$ as $x\to +\infty$. So
	 \begin{equation*}
	 \mathrm{card}\{x\in \mc{G}:\; \widetilde{u}_n=t \}\geq 2\quad \text{for all}\quad a.e. \;t\in (0,M_n).
	 \end{equation*}
	 We conclude from \eqref{eq16} of Lemma \ref{lem6} that 
	 \begin{equation}\label{eq24}
	 	\mc{E}(\widetilde{u}_n,\mc{G})\geq -\theta  \|\widetilde{u}_n\|^{4\be+2}_{L^2(\mc{G})}.
	 \end{equation}
	 Since  $u_n\to 0$ in $L^\infty_{loc}(\mc{G})$, we have $u_n(0)=\widetilde{u}_n(0)=\delta_n\to 0$ as $n\to\infty$.
	 Therefore, we deduce that 
	 \begin{align*}
	 	\|\widetilde{u}_n\|^{2}_{L^2(\mc{G})}&=\intg |\widetilde{u}_n|^2\dif x=\int_{\mc{G}\setminus e_0}|\widetilde{u}_n|^2\dif x+\int_{I_{e_0}}|\widetilde{u}_n|^2\dif x\\
	 	&=\int_{\mc{G}\setminus e_0}|u_n|^2\dif x+\int_{0}^{2\delta_n}|x-\delta_n|^2\dif x+\int_{2\delta_n}^{+\infty}|u_n(x-2\delta_n)|^2\dif x\\
	 	&=m+o(1).
	 \end{align*}
	 
	 Let $\va>0$ be an arbitrary, but fixed small number. By $(W_1)$ and Remark \ref{rem1}, there exists $R>0$ such that $|W(x)|<\va$ for all $x>R$.
	 Combining this, $\|u_n\|^2_{L^2(\mc{G})}\to m$, and $u_n\to 0$ in $L^\infty_{loc}(\mc{G})$, we have 
	 \begin{align*}
	 	\left| \int_{0}^{+\infty}(W(x+\delta_n)-W(x))|u_n|^2\dif x\right| &\leq \int_{0}^{R}\left|W(x+\delta_n)-W(x) \right||u_n|^2\dif x\\
	 	&\quad +\int_{R}^{+\infty}\left|W(x+\delta_n)-W(x) \right||u_n|^2\dif x\\
	 	&\leq 2\max_{0\leq x\leq R}|W(x)|\int_{0}^{R}|u_n|^2\dif x+2\va  \int_{R}^{+\infty}|u_n|^2\dif x \\
	 	&=o(1).
	 \end{align*}
	 Thus 
	 \begin{align*}
	 	\mc{E}(\widetilde{u}_n,\mc{G})&=\mc{E}(\widetilde{u}_n,\mc{G}\setminus e_0)+\mc{E}(\widetilde{u}_n,e_0)\\
	 	&=\mc{E}(u_n,\mc{G}\setminus e_0)+\dfrac{1}{2}\int_{0}^{2\delta_n}\left(|x-\delta_n|^\prime\right)^2\dif x+\dfrac{1}{2}\int_{0}^{2\delta_n}W(x)|x-\delta_n|^2\dif x-\dfrac{1}{p}\int_{0}^{2\delta_n}|x-\delta_n|^p\dif x\\
	 	&\quad +\dfrac{1}{2}\int_{2\delta_n}^{\infty}|u^\prime_n(x-2\delta_n)|^2\dif x+ \dfrac{1}{2}\int_{2\delta_n}^{+\infty}W(x)|u_n(x-2\delta_n)|^2\dif x-\dfrac{1}{p}\int_{2\delta_n}^{+\infty}|u_n(x-2\delta_n)|^p\dif x\\
	 	&=\mc{E}(u_n,\mc{G}\setminus e_0)+o(1)+\dfrac{1}{2}\int_{0}^{+\infty}|u^\prime_n|^2\dif x+\dfrac{1}{2}\int_{0}^{+\infty}W(x+\delta_n)|u_n|^2\dif x-\dfrac{1}{p}\int_{0}^{+\infty}|u_n|^p\dif x\\
	 	&=\mc{E}(u_n,\mc{G})+o(1)+\dfrac{1}{2}\int_{0}^{+\infty}(W(x+\delta_n)-W(x))|u_n|^2\dif x\\
	 	&=\mc{E}(u_n,\mc{G})+o(1).
	 \end{align*}
	 It follows from \eqref{eq24} that 
	 \begin{align*}
	 	\liminf\limits_{n\to\infty}\mc{E}(u_n,\mc{G})=\liminf\limits_{n\to\infty}\mc{E}(\widetilde{u}_n,\mc{G})\geq \liminf\limits_{n\to\infty}(-\theta  \|\widetilde{u}_n\|^{4\be+2}_{L^2(\mc{G})} ) = -\theta m^{2\be+1}.
	 \end{align*}
	 {\bf Case 2.} $M_n$ is not attained in any $e\sim I_{e}=[0,+\infty)$. Thus, $M_n$ is attained in a complementary set. According to $u_n\to 0$ in $L^\infty_{loc}(\mc{G})$, we obtain $M_n\to 0$ as $n\to+\infty.$ By $(W_1)$ and Remark \ref{rem1}, we have 
	 \begin{align*}
	 	\mc{E}(u_n,\mc{G})&=\dfrac{1}{2}\intg |u^\prime_n|^2\dif x+\dfrac{1}{2}\intg W(x)|u_n|^2\dif x-\dfrac{1}{p}\intg |u_n|^p\dif x\\
	 	&\geq \dfrac{1}{2}\int_{\bigcup\limits_{e\in E}e} W(x)|u_n|^2\dif x+\dfrac{1}{2}\int_{\mc{G}\setminus \bigcup\limits_{e\in E}e} W(x)|u_n|^2\dif x-\dfrac{1}{p}\intg |u_n|^p\dif x\\
	 	&\geq o(1)-\dfrac{1}{p}\intg |u_n|^p\dif x\\
	 	&\geq -\dfrac{(M_n)^{p-2}}{p}\intg |u_n|^2\dif x+o(1)\\
	 	&=-\dfrac{(M_n)^{p-2}}{p}(m+o(1))+o(1)
	 \end{align*}
	 Thus
	 \[\liminf\limits_{n\to\infty}\mc{E}(u_n,\mc{G})\geq 0>  -\theta m^{2\be+1}.\]
	 This complete the proof. 
\end{proof}

\begin{proposition}\label{pro1}
	Let $\{u_n\}\subset S_\mu(\mc{G})$ be a sequence such that $u_n\rightharpoonup u$ in $H^1(\mc{G})$ and $u_n\to u$ a.e. on $\mc{G}$. Let
	\[\al:=\mu-\|u\|^2_{L^2(\mc{G})}\in [0,\mu]\] 
	be the loss of mass in the limit. Then there exists a subsequence of $\{u_n\}$ (still denote $\{u_n\}$) such that one of the following alternatives holds:
	\begin{itemize}
		\item[(\rmnum{1})] (vanishing) $\al=\mu$. Then $u= 0$ a.e. on $\mc{G}$ and $\liminf\limits_{n\to\infty}\mc{E}(u_n,\mc{G})\geq -\theta \mu^{2\be+1};$
		\item[(\rmnum{2})] (dichotomy) $\al\in (0,\mu)$. Then 
		\begin{equation}\label{eq26}
			\liminf\limits_{n\to\infty}\mc{E}(u_n,\mc{G})>\min \{-\theta \mu \al^{2\be}, \mc{E}(w,\mc{G})\},
		\end{equation}
		where 
		\[w(x):=\sqrt{\dfrac{\mu}{\mu-\al}}u(x)\quad \text{for all}\quad x\in \mc{G}. \]
		\item[(\rmnum{3})] (compactness) $\al=0$. Then $u\in S_\mu(\mc{G})$, $u_n\to u$ in $H^1(\mc{G})\cap L^p(\mc{G})$ and $\mc{E}(u,\mc{G})\leq \liminf\limits_{n\to\infty}\mc{E}(u_n,\mc{G})$.
	\end{itemize}
\end{proposition}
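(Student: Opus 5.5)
The argument rests on the Brezis--Lieb splitting $u_n=u+v_n$ with $v_n:=u_n-u\rightharpoonup0$ in $H^1(\mc{G})$. First I collect the decompositions. By weak convergence,
\[\|u_n'\|_2^2=\|u'\|_2^2+\|v_n'\|_2^2+o(1),\qquad \|u_n\|_2^2=\|u\|_2^2+\|v_n\|_2^2+o(1),\]
so $\|v_n\|_2^2\to\al$. The Brezis--Lieb lemma, which applies since $u_n\to u$ a.e. and $\{u_n\}$ is bounded in $L^p$ by Lemma \ref{lem1}, gives $\|u_n\|_p^p=\|u\|_p^p+\|v_n\|_p^p+o(1)$. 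For the potential term I take $W_\infty=0$ by Remark \ref{rem1}. On each compact piece of $\mc{G}$, $v_n\to0$ uniformly, because $H^1$ embeds compactly into $C$ on bounded edges and on truncated halflines. Outside a compact set we have $|W|<\va$ by $(W_1)$. Hence $\intg W|v_n|^2\to0$, and likewise $\intg W u v_n\to0$. This yields
\[\mc{E}(u_n,\mc{G})=\mc{E}(u,\mc{G})+\mc{E}_{\mathrm{NLS}}(v_n,\mc{G})+o(1).\]
The same local uniform convergence shows that $v_n\to0$ in $L^\infty_{loc}(\mc{G})$, so Lemma \ref{lem4} is available for $v_n$ (its proof works equally for $\mc{E}_{\mathrm{NLS}}$).

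Case (i): here $\|u\|_2=0$, so $u=0$ a.e., $u_n\to0$ in $L^\infty_{loc}$, and Lemma \ref{lem4} with $m=\mu$ gives the claim directly.

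Case (iii): here $\|u_n\|_2\to\|u\|_2$, so $v_n\to0$ in $L^2$. Lemma \ref{lem1} applied to $v_n$, whose derivatives are bounded, then gives $v_n\to0$ in $L^p$. The potential part converges as above, and weak lower semicontinuity of $\|u'\|_2$ gives $\mc{E}(u)\le\liminf\mc{E}(u_n)$. To get strong $H^1$ convergence I would use that $\{u_n\}$ is minimizing, which is the intended use. Without that, $\|u_n'\|\to\|u'\|$ need not hold, so I would either invoke the minimizing property or state strong convergence in $L^2\cap L^p$ together with the energy inequality.

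Case (ii) is the main step. I would use the scaling inequality: for $t>1$ and any $f$, $\mc{E}(\sqrt t f)<t\,\mc{E}(f)$ whenever $f\ne0$, since the $p$-term scales like $t^{p/2}>t$. Apply this with $f=u$, $t=\mu/(\mu-\al)$, giving
\[\mc{E}(u)>\frac{\mu-\al}{\mu}\mc{E}(w).\]
For $v_n$, Lemma \ref{lem4} gives $\liminf\mc{E}_{\mathrm{NLS}}(v_n)\ge-\theta\al^{2\be+1}=\frac{\al}{\mu}(-\theta\mu\al^{2\be})$. Adding the two bounds,
\[\liminf\mc{E}(u_n)>\frac{\mu-\al}{\mu}\mc{E}(w)+\frac{\al}{\mu}\left(-\theta\mu\al^{2\be}\right)\ge\min\{-\theta\mu\al^{2\be},\mc{E}(w)\},\]
since a convex combination dominates the minimum. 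The strictness comes from $u\ne0$.

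The main obstacle is making the potential cross term and the $L^\infty_{loc}$ vanishing of $v_n$ rigorous on halflines where $W$ need not decay integrably. I handle this by splitting into a compact part, where convergence is uniform, and a tail, where $|W|<\va$ and the functions are $L^2$-bounded.
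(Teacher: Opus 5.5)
Your proposal is correct and follows the paper's proof essentially step by step: the Brezis--Lieb splitting, Lemma \ref{lem4} for the remainder $u_n-u$, the scaling bound $\mc{E}(u)>\frac{\mu-\al}{\mu}\mc{E}(w)$, and the convex-combination estimate; where you deviate, you are more careful than the paper, since you get $u_n-u\to0$ in $L^\infty_{loc}(\mc{G})$ from the compact embedding on compact subgraphs, whereas the paper's appeal to \eqref{eq10} does not give this by itself because $\|u_n-u\|^2_{L^2(\mc{G})}\to\al>0$. Your caveat on (iii) is also justified: the paper asserts $u_n\to u$ in $H^1(\mc{G})$ but only proves convergence in $L^2\cap L^p$, and strong $H^1$ convergence does fail in general (renormalize $u+\frac1n\sin(nx)\eta(x)$ for a fixed smooth cutoff $\eta\not\equiv0$ inside one edge), although it holds a posteriori for the minimizing sequence in Proposition \ref{pro2}, where $\mc{E}(u_n)\to\mc{E}(u)$.
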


\begin{proof}
	{\bf(Vanishing case happen)}
	By $\al=\mu$, we have $u=0$ a.e. on $\mc{G}$ and $u_n\to 0$ strongly in $L^\infty_{loc}(\mc{G})$. 
	It follows from \eqref{eq22} of Lemma \ref{lem4} that $	\liminf\limits_{n\to\infty}\mc{E}(u_n,\mc{G})\geq -\theta \mu^{2\be+1}$.
	
	{\bf(Dichotomy case happen)}
	Using Brezis-Lieb's Lemma we have
	\begin{align*}
		&\intg|u^\prime_n|^2\dif x=\intg |u^\prime_n-u^\prime|^2\dif x+\intg |u^\prime|^2\dif x+o(1);\\
		&\intg|u_n|^p\dif x=\intg |u_n-u|^p\dif x+\intg |u|^p\dif x+o(1);\\
		&\intg W(x)|u_n|^2\dif x=\intg W(x)|u_n-u|^2\dif x+\intg W(x)|u|^2\dif x+o(1).
	\end{align*}
	Thus
	\begin{equation*}
		\mc{E}(u_n,\mc{G})=\mc{E}(u_n-u,\mc{G})+\mc{E}(u,\mc{G})+o(1).
	\end{equation*}
	Since $u_n\rightharpoonup u$ in $H^1(\mc{G})$, the sequence $u_n \rightharpoonup u$ in $L^2(\mc{G})$ too. So 
	\begin{equation}\label{eq25}
		\|u_n-u\|^2_{L^{2}(\mc{G})}=\|u_n\|^2_{L^{2}(\mc{G})}-\|u\|^2_{L^{2}(\mc{G})}+o(1)\to \al
	\end{equation}
	as $n\to\infty$. Combining \eqref{eq25} and \eqref{eq10}, we obtain
	$\|u_n-u\|^2_{L^{\infty}_{loc}(\mc{G})}\to 0$ as $n\to\infty$. It follows from Lemma \ref{lem4} that 
	\[	\liminf\limits_{n\to\infty}\mc{E}(u_n-u,\mc{G})\geq -\theta\al^{2\be+1}.\]
	Thus
	\begin{align*}
		\liminf\limits_{n\to\infty}\mc{E}(u_n,\mc{G})=\liminf\limits_{n\to\infty}\mc{E}(u_n-u,\mc{G})+\mc{E}(u,\mc{G})\geq -\theta \al^{2\be+1}+\mc{E}(u,\mc{G}).
	\end{align*}
	Next, we derive an estimation of $\mc{E}(u,\mc{G})$. By direct calculation, we have 
	\begin{align*}
		\mc{E}(u,\mc{G})&=\dfrac{1}{2}\intg |u^\prime|^2+W(x)|u|^2\dif x-\dfrac{1}{p}\intg |u|^p\dif x\\
		&= \dfrac{\mu-\al}{2\mu}\intg|w^\prime|^2+W(x)|w|^2\dif x-\dfrac{1}{p}\intg \left(\dfrac{\mu-\al}{\mu} \right)^{\frac{p}{2}}|w|^p\dif x\\
		&= \dfrac{\mu-\al}{\mu}\left(\dfrac{1}{2}\intg|w^\prime|^2+W(x)|w|^2\dif x-\dfrac{1}{p}\intg \left(\dfrac{\mu-\al}{\mu} \right)^{\frac{p}{2}-1}|w|^p\dif x \right)  \\
		&> \dfrac{\mu-\al}{\mu}\left(\dfrac{1}{2}\intg|w^\prime|^2+W(x)|w|^2\dif x-\dfrac{1}{p}\intg |w|^p\dif x \right) \\
		&=\left(1- \dfrac{\al}{\mu}	\right) \mc{E}(w,\mc{G}).
	\end{align*}
	Therefore
	\begin{align*}
		\liminf\limits_{n\to\infty}\mc{E}(u_n,\mc{G})&>-\theta \al^{2\be+1}+\left(1-\dfrac{\al}{\mu}\right) \mc{E}(w,\mc{G})\\
		&=\left(-\theta \mu \al^{2\be} \right) \left(\dfrac{\al}{\mu} \right) +\mc{E}(w,\mc{G})\left(1-\dfrac{\al}{\mu}\right)
	\end{align*}
	This clearly implies \eqref{eq26}.
	
	{\bf(Compactness)} Since $u_n\in S_{\mu}(\mc{G})$ and $\al=0$, we have $\|u_n\|_{L^2{(\mc{G})}}\to\|u\|_{L^2{(\mc{G})}}$. 
	So $u_n\to u$ in $L^2{(\mc{G})}$ and therefore $u\in S_{\mu}(\mc{G})$. Moreover, since $u_n\rightharpoonup u$ in $H^1(\mc{G})$ hence $\{u_n\}$ is bounded in $H^1(\mc{G})$ in the sense of subsequence. It follows from \eqref{eq9} of Lemma \ref{lem1} that 
	\begin{align*}
		\|u_n-u\|^p_{L^p(\mc{G})}&\leq C\|u_n-u\|^{\frac{p}{2}+1}_{L^2(\mc{G})}\cdot \|(u_n-u)^\prime\|^{\frac{p}{2}-1}_{L^2(\mc{G})}\\
		&=o(1)\|u_n-u\|^{\frac{p}{2}-1}_{H^1(\mc{G})}\\
		&\leq o(1)\left(\|u_n\|_{H^1(\mc{G})}+\|u\|_{H^1(\mc{G})} \right)^{\frac{p}{2}-1}\\
		&=o(1).
	\end{align*}
	Thus $u_n\to u$ in $H^1(\mc{G})\cap L^p(\mc{G})$. Finally, according to the semicontinuity of norm, we have
	\begin{align*}
		\liminf\limits_{n\to\infty}\mc{E}(u_n,\mc{G})&=\liminf\limits_{n\to\infty}\left(\dfrac{1}{2}\intg |u^\prime_n|^2+W(x)|u_n|^2\dif x-\dfrac{1}{p}\intg |u_n|^p\dif x \right) \\
		&\geq \dfrac{1}{2}\intg |u^\prime|^2+W(x)|u|^2\dif x-\dfrac{1}{p}\intg |u|^p\dif x\\
		&=\mc{E}(u,\mc{G}).
	\end{align*}
	 The proof is finished.
\end{proof}

\section{Proof of theorem \ref{thm1}}
In this section, unless the contrary is explicitly stated, we will always assume that $\mc{G}$ is a noncompact, connected metric graph and having at least one bounded edge. 
Let $e$ be a bounded edge connecting vertices $\mathrm{v_1}$ and $\mathrm{v_2}$ of the graph $\mc{G}$. Furthermore, let us assume without loss of generality that
\begin{equation*}
	\mathrm{deg}(\mathrm{v_1})\ne 2\quad \text{and}\quad \mathrm{deg}(\mathrm{v_2})\ne 2.
\end{equation*}
Note that the cases where $\mathrm{deg}(\mathrm{v_1})=1$ and $\mathrm{deg}(\mathrm{v_2})=1$ cannot occur. If such a situation were to arise, $e$ would be an isolated edge in the graph $\mc{G}$, contradicting the assumption that $\mc{G}$ is connected. Therefore, if necessary by swapping $\mathrm{v_1}$ and $\mathrm{v_2}$, we may make the following assumption:
\begin{equation}\label{eq27}
	\mathrm{deg}(\mathrm{v_1})\ne 2\quad \text{and}\quad \mathrm{deg}(\mathrm{v_2})\ge 3.
\end{equation}
The edge $e$ is called an end-edge of $\mc{G}$ if $\mathrm{deg}(\mathrm{v_1})=1$.

\begin{lemma}\label{lem5}
	For $\va>0$, there exists $\mu_\va:=\mu(\va,|e|)$ such that 
	\begin{equation}\label{eq28}
		\inf_{v\in \mc{H}_{\mu}(\mc{G})}\mc{E}(v,\mc{G})\leq -\theta (1-\va)\mu^{2\be+1}
	\end{equation}
	for all $\mu>\mu_\va$, where $|e|$ denotes the length of $f\in E$. Moreover, if $e$ is an end-edge of $\mc{G}$, we have 
	\begin{equation}\label{eq29}
		\inf_{v\in \mc{H}_{\mu}(\mc{G})}\mc{E}(v,\mc{G})\leq -\theta (1-\va)2^{2\be}\mu^{2\be+1}.
	\end{equation}
\end{lemma}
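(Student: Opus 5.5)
We prove Lemma \ref{lem5} by exhibiting, for large $\mu$, an explicit competitor in $\mc{H}_\mu(\mc{G})$: a truncated, rescaled soliton concentrated at the midpoint of $e$ (or a truncated half-soliton at the degree-one vertex $\mathrm{v_1}$ when $e$ is an end-edge). Since the soliton width scales like $\mu^{-\be}$, for $\mu$ large almost all of its mass fits inside $e$. The potential term then costs only $O(\mu)$, which is negligible against $\mu^{2\be+1}$ because $\be>0$.

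\textbf{Construction.} Identify $e$ with $[0,|e|]$ and set $\ell:=|e|/2$. Let $\chi$ be a cutoff with $\chi=1$ on $[-\ell/2,\ell/2]$, $\chi=0$ outside $(-\ell,\ell)$, $0\le\chi\le1$ and $|\chi'|\le 4/\ell$. Define $\psi(x)=\chi(x-\ell)\phi_\mu(x-\ell)$ on $e$ and $\psi=0$ elsewhere on $\mc{G}$. Then $\psi$ vanishes at both endpoints of $e$, so it is continuous on $\mc{G}$, lies in $\mathbb{H}$, and attains its maximum $\phi_\mu(0)$ at the midpoint of $e$. Let $v=\sqrt{\mu}\,\psi/\|\psi\|_{L^2}$. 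Multiplication by a positive constant preserves both continuity and the location of the maximum, so $v\in\mc{H}_\mu(\mc{G})$.

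\textbf{Tail estimates.} Because $\phi_\mu(x)=\mu^\al C_p\,\mathrm{sech}^{\al/\be}(C_p'\mu^\be x)$, a change of variables $y=\mu^\be x$ shows that
\[
\|\phi_\mu\|^2_{L^2(|x|>\ell/2)},\quad \|\phi_\mu'\|^2_{L^2(|x|>\ell/2)},\quad \|\phi_\mu\|^p_{L^p(|x|>\ell/2)}
\]
are each bounded by a polynomial in $\mu$ times $e^{-c\mu^\be\ell}$. This decays faster than any power of $\mu$. Hence
\[
\|\psi\|_{L^2}^2=\mu(1+o(1)),\qquad \mc{E}_{\mathrm{NLS}}(\psi,\mc{G})=\mc{E}_{\mathrm{NLS}}(\phi_\mu,\r)+o(1)=-\theta\mu^{2\be+1}+o(1).
\]
The cross term containing $\chi'$ is also $o(1)$, since it is controlled by the $L^2$ tail of $\phi_\mu$. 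The normalization factor $t=\sqrt{\mu}/\|\psi\|_{L^2}$ satisfies $t=1+o(1)$; more precisely, $t^2-1$ decays faster than any power of $\mu$. The kinetic and $L^p$ terms are of order $\mu^{2\be+1}$, so rescaling by $t$ changes the energy by $o(\mu^{2\be+1})$. Finally, the potential term is bounded by $\frac12\|W\|_{L^\infty}\mu=O(\mu)$ by $(W_1)$, and $O(\mu)=o(\mu^{2\be+1})$. Together,
\[
\mc{E}(v,\mc{G})\le-\theta\mu^{2\be+1}+o(\mu^{2\be+1}).
\]
Choosing $\mu_\va$ so that the error term is at most $\theta\va\mu^{2\be+1}$ for all $\mu>\mu_\va$ gives \eqref{eq28}. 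The threshold depends only on $\va$, $|e|$, $p$ and $\|W\|_\infty$.

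\textbf{End-edge case.} If $\mathrm{deg}(\mathrm{v_1})=1$, place $\mathrm{v_1}$ at $x=0$. Use the half soliton, namely $\phi_{2\mu}$ restricted to $[0,\infty)$, which has mass $\mu$. Cut it off smoothly near $x=|e|$ so that it vanishes at $\mathrm{v_2}$, and extend it by zero to the rest of $\mc{G}$. Continuity requires nothing at $\mathrm{v_1}$, since that vertex has degree one, and the maximum is attained at $\mathrm{v_1}\in e$. The same tail and normalization estimates give energy $-\theta 2^{2\be}\mu^{2\be+1}+o(\mu^{2\be+1})$ by \eqref{eq12}, which proves \eqref{eq29}.

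The main technical step is the bookkeeping in the tail estimates: checking that the cutoff and normalization errors are genuinely $o(\mu^{2\be+1})$. The exponential decay of $\mathrm{sech}$ at scale $\mu^{-\be}$ makes these errors super-polynomially small, so this step is routine rather than a real obstacle.
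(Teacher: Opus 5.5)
Your proof is correct and follows essentially the same route as the paper. The test function is a mass-$\mu$ soliton profile concentrated at scale $\mu^{-\be}$ inside the bounded edge $e$ and extended by zero, with a half-soliton at the tip $\mathrm{v_1}$ in the end-edge case, and the potential contribution is negligible. The differences are only technical: the paper uses density to pick a fixed compactly supported $\phi_\va$ of unit mass with $\mc{E}_{\mathrm{NLS}}(\phi_\va,\r)\le-(1-\va)\theta$ and dilates it exactly, so it needs no renormalization or tail estimates, whereas you truncate the exact soliton at a fixed scale and control the errors by exponential decay; you also absorb the potential term as $O(\mu)=o(\mu^{2\be+1})$ rather than relying on the normalization $W_\infty=0$ of Remark \ref{rem1}, which is slightly more robust since it does not use the sign of the shifted potential.
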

\begin{proof}
	Let $\phi$ denote the \textit{soliton} of unitary mass centered at the origin. It follows from \eqref{eq11} of Lemma \ref{lem2} that $\mc{E}_{\mathrm{NLS}}(\phi,\r)=-\theta$. Hence, for any $\va>0$, by a standard density arguments there exists $\phi_\va\in H^1(\r)$ with compact support such that
	\begin{align}\label{eq30}
	\left\{\begin{array}{ll}
	\mc{E}_{\mathrm{NLS}}(\phi_\va,\r)\leq -(1-\va)\theta,\\
	\|\phi_\va\|^2_{L^2(\r)}=1.
	\end{array} \right.
	\end{align}
	For any $\mu>0$, we define the \textit{dilation scaling} by 
	\[v_\mu(x):=\mu^\al\phi_\va(\mu^\be x) \quad \text{for all}\quad  x\in \r, \]
	where $\al=\frac{2}{6-p}$ and $\be=\frac{p-2}{6-p}$. A direct computation shows that
	\begin{align*}
		\|v_\mu\|^2_{L^2(\r)}&=\int_{\r}|v_\mu|^2\dif x=\mu^{2\al}\int_{\r}|\phi_\va(\mu^\be x)|^2\dif x\\
		&=\mu^{2\al-\be}\int_{\r}\phi_\va(y)|^2\dif y\\
		&=\mu.
	\end{align*}
	It follows from Remark \ref{rem1} and \eqref{eq30} that
	\begin{align*}
		\mc{E}(v_\mu,\r)&=\dfrac{1}{2}\int_{\r}|v^\prime_\mu(x)|^2+W(x)|v_\mu(x)|^2\dif x-\dfrac{1}{p}\int_{\r}|v_\mu(x)|^p\dif x\\
		&=\mc{E}_{\mathrm{NLS}}(\phi_\va,\r)\cdot \mu^{2\be+1}+\dfrac{1}{2}\int_{\r}W(x)|v_\mu(x)|^2\dif x\\
		&\leq \mu^{2\be+1} \mc{E}_{\mathrm{NLS}}(\phi_\va,\r)+\dfrac{\mu}{2}W_\infty\\
		&\leq -\theta (1-\va)\mu^{2\be+1}.
	\end{align*}
	Combining $e\in E$ is bounded, the definition of $v_\mu$ and $\phi_\va\in H^1(\r)$ has compact support, we have $|\mathrm{supp} v_\mu|<|e|$ when $\mu$ large enough. Thus, we can fit $v_\mu$ on $e$ such that $v_\mu(\mathrm{v_1})=v_\mu(\mathrm{v_2})=0$. Let 
	\begin{align*}
	\widetilde{v}_\mu(x):=\left\{\begin{array}{ll}
	v_\mu(x)&\quad \text{if} \quad x\in e \sim [0,|e|] \\
	0&\quad \text{if} \quad x\in \mc{G}\setminus {e}.
	\end{array} \right.
	\end{align*}
	Clearly, we have $\widetilde{v}_\mu\in \mc{H}_\mu(\mc{G})$. Therefore, we have 
	\[\inf_{v\in \mc{H}_{\mu}(\mc{G})}\mc{E}(v,\mc{G})\leq\mc{E}(\widetilde{v}_\mu,\mc{G})\leq -\theta (1-\va)\mu^{2\be+1}. \]
	This is precisely \eqref{eq28}.
	
	Let $e$ is an end-edge of $\mc{G}$. It follows from \eqref{eq27} that $e$ is attached to $\mc{G}$ at $\mathrm{v_2}$ and $\mathrm{v_1}$ is the tip of the edge $e$. Now, in this case, for any $\mu>0$, we define the \textit{dilation scaling} by 
	\[w_\mu(x):=(2\mu)^\al\phi_\va((2\mu)^\be x) \quad \text{for all}\quad  x\in \r. \]
	Combining \eqref{eq30} and the fact that $w_\mu(-x)=w_\mu(x)$, we see that 
	\begin{align*}
		\|w_\mu(x)\|^2_{L^2(\r^+)}&=\int_{0}^{+\infty}|w_\mu(x)|^2\dif x=\dfrac{1}{2}\int_{\r}|w_\mu(x)|^2\dif x\\
		&=\dfrac{(2\mu)^{2\al}}{2}\int_{\r}|\phi_\va((2\mu)^\be x)|^2\dif x\\
		&=\mu.
	\end{align*} 
    On the other hand, by $(W_1)$, Remark \ref{rem1} and \eqref{eq30}, we obtain that 
	\begin{align*}
		\mc{E}(w_\mu,\r^+)&=\dfrac{1}{2}\mc{E}(w_\mu,\r)\\
		&=\dfrac{1}{4}\int_{\r}|w^\prime_\mu(x)|^2+W(x)|w_\mu(x)|^2\dif x-\dfrac{1}{2p}\int_{\r}|w_\mu(x)|^p\dif x\\
		&=\dfrac{1}{2}\mc{E}_{\mathrm{NLS}}(\phi_\va,\r)\cdot (2\mu)^{2\be+1}+\dfrac{1}{4}\int_{\r}W(x)|w_\mu(x)|^2\dif x\\
		&\leq \dfrac{1}{2}(2\mu)^{2\be+1} \mc{E}_{\mathrm{NLS}}(\phi_\va,\r)+\dfrac{\mu}{4}W_\infty\\
		&\leq -\dfrac{1}{2}\theta (1-\va)(2\mu)^{2\be+1}\\
		&=-\theta (1-\va)2^{2\be}\mu^{2\be+1}.
	\end{align*}
	Letting $\mu>0$ large enough, we have $|\mathrm{supp} w_\mu|<|e|$. Thus, we can fit $w_\mu$ on $e$ such that $w_\mu(\mathrm{v_2})=0$. 
	Let 
	\begin{align*}
	\widetilde{w}_\mu(x):=\left\{\begin{array}{ll}
	w_\mu(x)&\quad \text{if} \quad  x\in e \sim [0,|e|]\\
	0&\quad \text{if} \quad x\in \mc{G}\setminus {e}.
	\end{array} \right.
	\end{align*}
	Clearly, we have $\widetilde{w}_\mu\in \mc{H}_\mu(\mc{G})$. Therefore, we have 
	\[\inf_{v\in \mc{H}_{\mu}(\mc{G})}\mc{E}(v,\mc{G})\leq\mc{E}(\widetilde{w}_\mu,\mc{G}) \leq -\theta (1-\va)2^{2\be}\mu^{2\be+1}. \]
	This is precisely \eqref{eq29}.
\end{proof}
\begin{proposition}\label{pro2}
	There exists a mass threshold $\bar{\mu}$ such that the minimization problem \eqref{eq8} has a minimizer $u\in \mc{H}_{\mu}(\mc{G})$ for all $\mu>\bar{\mu}$.
\end{proposition}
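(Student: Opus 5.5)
Take a minimizing sequence $\{u_n\}\subset\mc{H}_\mu(\mc{G})$ for \eqref{eq8}. Since $\mc{E}(|u_n|,\mc{G})=\mc{E}(u_n,\mc{G})$ and $|u_n|\in\mc{H}_\mu(\mc{G})$, we may assume $u_n\ge 0$. By Lemma \ref{lem3} the sequence is bounded in $H^1(\mc{G})$, so after passing to a subsequence $u_n\rightharpoonup u$ in $H^1(\mc{G})$, $u_n\to u$ in $L^\infty_{loc}(\mc{G})$, and $u_n\to u$ a.e. Set $\al=\mu-\|u\|^2_{L^2(\mc{G})}$ and apply Proposition \ref{pro1}. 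The task is to rule out vanishing and dichotomy when $\mu$ is large. Then compactness holds: $u\in S_\mu(\mc{G})$, $u_n\to u$ in $H^1(\mc{G})$, and $\mc{E}(u,\mc{G})\le\liminf\mc{E}(u_n,\mc{G})=\inf_{\mc{H}_\mu}\mc{E}$. Because $u_n\to u$ uniformly on $\mc{G}$ (by \eqref{eq10} applied to $u_n-u$), the constraint $\|u_n\|_{L^\infty(e)}=\|u_n\|_{L^\infty(\mc{G})}$ passes to the limit. Hence $u\in\mc{H}_\mu(\mc{G})$ and $u$ is a minimizer.

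\emph{Excluding vanishing.} If $\al=\mu$, then $u_n\to0$ in $L^\infty_{loc}$, and in particular $\|u_n\|_{L^\infty(e)}\to 0$ because $e$ is bounded. Since the maximum is attained on $e$, this gives $\|u_n\|_{L^\infty(\mc{G})}\to0$. As in Case 2 of Lemma \ref{lem4}, this yields $\liminf\mc{E}(u_n,\mc{G})\ge 0$. This contradicts Lemma \ref{lem5}, which gives $\inf\le-\theta(1-\va)\mu^{2\be+1}<0$ for $\mu>\mu_\va$. So the extra constraint excludes vanishing directly; Proposition \ref{pro1}(i) alone would only give the non-strict comparison $-\theta\mu^{2\be+1}$ against the upper bound $-\theta(1-\va)\mu^{2\be+1}$, which is not a contradiction.

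\emph{Excluding dichotomy.} This is the main obstacle. Suppose $\al\in(0,\mu)$. By \eqref{eq26}, either $\inf>-\theta\mu\al^{2\be}$ or $\inf>\mc{E}(w,\mc{G})$, where $w=\sqrt{\mu/(\mu-\al)}\,u\in S_\mu(\mc{G})$. Since uniform local convergence on $e$ preserves the location of the maximum, one expects $w\in\mc{H}_\mu(\mc{G})$. Then the second alternative gives $\inf>\mc{E}(w)\ge\inf$, which is impossible. The delicate point is that $u_n-u$ could carry its sup on a halfline, in which case $u$ need not inherit the maximum on $e$. To handle this, I would argue quantitatively. If the escaping part has mass $\al$ and sits on the halflines where $u$ is small, then $\|u\|_{L^\infty(e)}\ge\limsup\|u_n\|_{L^\infty(\mc{G})}\ge\|u\|_{L^\infty(\mc{G})}$, so $u$, and hence $w$, keeps its maximum on $e$. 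For the first alternative, $-\theta\mu\al^{2\be}\ge-\theta\mu^{2\be+1}(\al/\mu)^{2\be}$. Combined with Lemma \ref{lem5}, this forces $(\al/\mu)^{2\be}>1-\va$, so $\al$ is close to $\mu$ and $u$ carries only a small fraction of the mass.

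Finally, I would close the near-vanishing regime by refining the vanishing argument. The Brezis--Lieb splitting $\mc{E}(u_n)=\mc{E}(u_n-u)+\mc{E}(u)+o(1)$ gives $\mc{E}(u)\ge-\theta 2^{2\be}(\mu-\al)^{2\be+1}+\frac{W_-}{2}(\mu-\al)$ via Lemma \ref{lem2}. The $L^\infty$ control on $e$ then bounds the sup of $u_n-u$ by that of $u$, which is small relative to $\mu^{\al}$ scales when $\mu-\al\ll\mu$. Using Case 2 of Lemma \ref{lem4}, this gives $\mc{E}(u_n-u)\ge -C\|u\|_\infty^{p-2}\al+o(1)$, which is far above $-\theta(1-\va)\mu^{2\be+1}$. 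Choosing $\va$ small and $\bar\mu\ge\mu_\va$ large enough to absorb the lower-order term $W_-\mu$ yields the contradiction. Thus $\al=0$ for all $\mu>\bar\mu$, and the minimizer exists.
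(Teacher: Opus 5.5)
Your proof is correct. Its skeleton is the paper's: a minimizing sequence, Proposition \ref{pro1}, $w\in\mc{H}_\mu(\mc{G})$ to rule out the second alternative in \eqref{eq26}, and \eqref{eq28} to force $\al/\mu$ close to $1$ in the first. The two proofs differ in how they exclude the regime $u\neq0$ with $\|u\|^2_{L^2(\mc{G})}=\tau\mu$ and $\tau$ small.

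The paper does this with a single a priori estimate. From $\mc{E}(u_n,\mc{G})\le-\frac{\theta}{2}\mu^{2\be+1}$ and $\intg|u_n|^p\dif x\le\mu\|u_n\|^{p-2}_{L^\infty(e)}$ it gets $\|u_n\|^2_{L^\infty(e)}\ge C'\mu^{\be+1}$. This passes to $u$ by uniform convergence on $e$. Combined with \eqref{eq10} and $\|u_n'\|^2_{L^2(\mc{G})}\le K\mu^{2\be+1}$, it gives $\|u\|^2_{L^2(\mc{G})}\ge C''\mu$, which excludes vanishing and near-vanishing at once.

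You split the work. You exclude vanishing softly: $\|u_n\|_{L^\infty(\mc{G})}=\|u_n\|_{L^\infty(e)}\to0$ forces $\liminf\mc{E}\ge0$. You exclude near-vanishing by an energy count, using Brezis--Lieb, the bound $\mc{E}(u,\mc{G})\ge-\theta2^{2\be}(\tau\mu)^{2\be+1}+\frac{W_-}{2}\tau\mu$, and $\liminf\mc{E}(u_n-u,\mc{G})\ge-\frac1p\|u\|^{p-2}_{L^\infty(\mc{G})}\al$. This is the same mechanism read through the energy rather than the sup, with the same ingredients. It gains modularity, not brevity.

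You should state the step you leave implicit. The kinetic bound $\|u_n'\|^2_{L^2(\mc{G})}\le K\mu^{2\be+1}$ follows from \eqref{eq9} as in Lemma \ref{lem3}, tracking the dependence on $\mu$. It gives $\|u\|^2_{L^\infty(\mc{G})}\le2\sqrt{K\tau}\,\mu^{\be+1}$, hence $\|u\|^{p-2}_{L^\infty(\mc{G})}\al\le c\,\tau^{(p-2)/4}\mu^{2\be+1}$. That is the precise content of ``small relative to the soliton scale.'' Write the scale as $\mu^{2/(6-p)}$, since $\mu^{\al}$ clashes with the lost mass $\al$.

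Also, the ``delicate point'' you raise is not one. For every $x$, $|u(x)|=\lim_n|u_n(x)|\le\lim_n\|u_n\|_{L^\infty(e)}=\|u\|_{L^\infty(e)}$. So $u$, and hence $w$, always keeps its maximum on $e$, wherever $u_n-u$ puts its mass. Your own inequality chain proves this without the hypothesis you attach to it, and it is exactly the paper's argument.
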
	
\begin{proof}
	Let $\va\in(0,\frac{1}{2})$. It follows from Lemma \ref{lem5} that there exists $\mu_\va:=\mu(\va,|e|)$ such that \eqref{eq28} and \eqref{eq29} hold for all $\mu>\mu_\va$. Let $\{u_n\}\subset \mc{H}_{\mu}(\mc{G})$ be a minimizing sequence of $\mc{E}(v,\mc{G})$. 
	For all $\mu>\mu_\va$, it follows from \eqref{eq28} that
	\[\lim_{n\to\infty}\mc{E}(u_n,\mc{G})=\inf_{v\in \mc{H}_\mu(\mc{G})}\mc{E}(v,\mc{G})<-\frac{\theta}{2} \mu^{2\be+1}. \]
    Thus, for large enough $n$, we have
    \[\mc{E}(u_n,\mc{G})\leq -\frac{\theta}{2} \mu^{2\be+1}. \]
    This implies that
    \begin{equation}\label{eq31}
    	\dfrac{\theta}{2}\mu^{2\be+1}+\dfrac{1}{2}\intg |u^\prime_n(x)|^2\dif x+\dfrac{1}{2}\intg W(x)|u_n(x)|^2\dif x\leq \dfrac{1}{p}\intg |u_n(x)|^p\dif x.
    \end{equation}
    On the one hand, it follows from \eqref{eq31} and $u_n\in \mc{H}_{\mu}(\mc{G})$ that 
    \begin{align*}
    	\dfrac{p\theta}{2}\mu^{2\be+1}+\dfrac{p}{2}\intg W(x)|u_n(x)|^2\dif x&\leq \intg |u_n(x)|^p\dif x\\
    	&\leq \|u_n\|^{p-2}_{L^\infty(\mc{G})}\|u_n\|^2_{L^2(\mc{G})}\\
    	&=\mu \|u_n\|^{p-2}_{L^\infty(e)}.
    \end{align*}
	Combining this and $(W_2)$, we have 
	\begin{align*}
		\|u_n\|^{p-2}_{L^\infty(e)}&\geq \dfrac{p\theta}{2}\mu^{2\be}+\dfrac{p}{2\mu}\intg W(x)|u_n(x)|^2\dif x\\
		&\geq \dfrac{p\theta}{2}\mu^{2\be}+\dfrac{p}{2\mu}W_{-}\intg |u_n(x)|^2\dif x\\
		&=\dfrac{p\theta}{2}\mu^{2\be}+\dfrac{p}{2}W_{-}\\
		&=C(p,W)\mu^{2\be},
	\end{align*}
	where $C(p,W)>0$ depend only on $p$ and $W$. Solving the above inequality, we have 
	\begin{equation*}
		\|u_n\|^2_{L^\infty(e)}\geq C^\prime \mu^{\frac{4}{6-p}}= C^\prime \mu^{\be+1},
	\end{equation*}
	where $C^\prime=C(p,W)^{\frac{2}{p-2}}>0$.
	On the other hand, according to \eqref{eq31} and \eqref{eq9}, we obtain that 
	\begin{align*}
		\|u^\prime_n\|^2_{L^2(\mc{G})}&\leq -\theta \mu^{2\be+1}+\dfrac{2}{p}\intg |u_n|^p\dif x-\intg W(x)|u_n|^2\dif x\\
		&\leq -\theta \mu^{2\be+1}+\dfrac{2}{p}\|u_n\|^{\frac{p}{2}+1}_{L^2(\mc{G})}\cdot \|u^\prime_n\|^{\frac{p}{2}-1}_{L^2(\mc{G})}-W_{-}\|u_n\|^2_{L^2(\mc{G})}\\
		&=\dfrac{2}{p}\mu^{\frac{p}{4}+\frac{1}{2}}\cdot\|u^\prime_n\|^{\frac{p}{2}-1}_{L^2(\mc{G})}- W_{-}\mu-\theta \mu^{2\be+1}\\
		&\leq \dfrac{2}{p}\mu^{\frac{p}{4}+\frac{1}{2}}\cdot\|u^\prime_n\|^{\frac{p}{2}-1}_{L^2(\mc{G})}.
	\end{align*}
	This implies that 
	\[ \|u^\prime_n\|^2_{L^2(\mc{G})}\leq \left(\dfrac{2}{p} \right)^{\frac{4}{6-p}}\mu^{2\be+1}.  \]
	Hence $\{u_n\}$ is bounded in $H^1(\mc{G})$. Along a subsequence of $\{u_{n}\}$, we may assume that $u_n\rhp u$ in $H^{1}(\mc{G})$ and $a.e.$ on $\mc{G}$. Since $u_n\to u$ uniformly on the edge $e$, we have 
	\[0<C^\prime \mu^{\be+1}\leq \lim_{n\to\infty}\|u_n\|^2_{L^\infty(e)}=\|u\|^2_{L^\infty(e)}\leq \|u\|^2_{L^\infty(\mc{G})}. \]
	Thus $u\neq 0$. This implies that the vanishing case of Proposition \ref{pro1} would not happen.
		
	Let $m:=\mu-\|u\|^2_{L^2(\mc{G})}\in [0,\mu)$. According to \eqref{eq10} of Lemma \ref{lem1}, we have 
	\begin{align*}
		C^\prime \mu^{\be+1}&\leq \lim_{n\to\infty}\|u_n\|^2_{L^\infty(e)}=\|u\|^2_{L^\infty(e)}\leq\|u\|^2_{L^\infty(\mc{G})}\\
		&\leq 2\|u\|_{L^2(\mc{G})}\|u^\prime \|_{L^2(\mc{G})} \\
		&=2\sqrt{\mu-m}\liminf\limits_{n\to\infty}\|u^\prime_n\|_{L^2(\mc{G})}\\
		&\leq 2\sqrt{\mu-m}\left(\dfrac{2}{p} \right)^{\frac{2}{6-p}}\mu^{\be+\frac{1}{2}}.
	\end{align*}
	Therefore
	\[m\leq (1-C^{\prime\prime})\mu,\]
	where $C^{\prime\prime}:=C^{\prime\prime}(p,W)>0$. Let us note that $\|u\|_{L^\infty(e)}\leq \|u\|_{L^\infty(\mc{G})}$ and 
	\[\|u\|_{L^\infty(\mc{G})}\leq \liminf\limits_{n\to\infty}\|u_n\|_{L^\infty(\mc{G})}= \liminf\limits_{n\to\infty}\|u_n\|_{L^\infty(e)}=\|u\| _{L^\infty(e)},\]
	we obtain $\|u\|_{L^\infty(\mc{G})}=\|u\| _{L^\infty(e)}.$ Thus $u\in\mc{H}_\mu(\mc{G})$. Denote 
	\[w(x)=\sqrt{\dfrac{\mu}{\mu-m}}u(x). \]
	By direct calculation, we have $w\in\mc{H}_\mu(\mc{G})$. Hence
	\[\liminf\limits_{n\to\infty}\mc{E}(u_n,\mc{G})=\inf_{v\in \mc{H}_\mu(\mc{G})}\mc{E}(v,\mc{G}) \leq \mc{E}(w,\mc{G}). \]
	This implies that 
	\[\liminf\limits_{n\to\infty}\mc{E}(u_n,\mc{G})>-\theta \mu \al^{2\be}\]
	if the dichotomy case of Proposition \ref{pro1} happen. By Lemma \ref{lem5}, we have 
	\[-\theta (1-\va)\mu^{2\be+1}> -\theta \mu m^{2\be}\]
	for all $\mu>\mu_\va$. Thus
	\[\sqrt[2\be]{1-\va}\mu<m\leq (1-C^{\prime\prime})\mu.\]
	Letting $\va\to 0$, we obtain a contradiction. This implies that the dichotomy case of Proposition \ref{pro1} is impossible. It follows from Proposition \ref{pro1} that $u_n\to u$ in $H^1(\mc{G})\cap L^p(\mc{G})$ and $\mc{E}(u,\mc{G})= \liminf\limits_{n\to\infty}\mc{E}(u_n,\mc{G})=\inf_{v\in \mc{H}_\mu(\mc{G})}\mc{E}(v,\mc{G})$. This means that $u\in \mc{H}_\mu(\mc{G})$ is a minimizer for all $\mu>\mu_\va$.
\end{proof}	
	Next, we claim that, for large enough $\mu$, the minimizer which obtained by Proposition \ref{pro2} achieves its maximum only on the edge $e$.
\begin{proposition}\label{pro3}
	Let $u\in \mc{H}_\mu(\mc{G})$ be a minimizer which obtained by Proposition \ref{pro2}. Then, for large enough $\mu$, we have 
	\begin{equation}\label{eq32}
		\|u\|_{L^\infty(e)}>\|u\|_{L^\infty(\mc{G}\setminus e)}.
	\end{equation}
\end{proposition}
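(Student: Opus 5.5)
We argue by contradiction. Suppose that for a sequence $\mu_j\to\infty$ there are minimizers $u_j\in\mc{H}_{\mu_j}(\mc{G})$ given by Proposition \ref{pro2} with $\|u_j\|_{L^\infty(e)}=\|u_j\|_{L^\infty(\mc{G}\setminus e)}$. Replacing $u_j$ by $|u_j|$ changes neither the energy nor membership in $\mc{H}_{\mu_j}(\mc{G})$, so we may take $u_j\ge0$. Since $\mc{G}$ is connected and the maximum is attained both on $e$ and off $e$, it must be attained at an endpoint of $e$, i.e.\ at $\mathrm{v_1}$ or $\mathrm{v_2}$, or at some point outside $e$ at the same height.

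We work in the natural blow-up scale. Set $u_j(x)=\mu_j^{\al}\psi_j(\mu_j^{\be}x)$, so that $\psi_j$ lives on the rescaled graph $\mu_j^{\be}\mc{G}$, has unit mass, and its energy is $\mu_j^{-(2\be+1)}\mc{E}(u_j,\mc{G})=\mc{E}_{\mathrm{NLS}}(\psi_j)+O(\mu_j^{-2\be})$, using the boundedness of $W$ from $(W_1)$ and $(W_2)$. Lemma \ref{lem5} gives $\limsup \mu_j^{-(2\be+1)}\mc{E}(u_j,\mc{G})\le-\theta$. Combined with the lower bounds of Lemma \ref{lem2}, this shows that $\psi_j$ is a minimizing sequence at level at most $-\theta$ on graphs which, locally, look like $\r$, $\r^+$, or a star of $\deg(\mathrm{v})$ half-lines, since all bounded edges have length $\mu_j^{\be}|e_i|\to\infty$. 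The bounds in the proof of Proposition \ref{pro2} give $\|u_j\|^2_{L^\infty}\ge C'\mu_j^{\be+1}$, i.e.\ $\|\psi_j\|_\infty\ge c>0$, so the mass does not vanish.

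The key step is to show that a function attaining its maximum at a vertex of degree at least $3$, or at a point outside $e$, loses a definite amount of energy. Consider first the case where the maximum is at $\mathrm{v_2}$, with $\deg(\mathrm{v_2})\ge3$. Every level $t\in(0,\max)$ is then crossed at least $3$ times near $\mathrm{v_2}$, because $u_j\to0$ along each of the long edges out of $\mathrm{v_2}$; strictly, this needs the function to decay along each branch, which holds up to a mass $o(1)$ by the concentration estimate. Lemma \ref{lem6} with $N=3$ then gives $\mc{E}(u_j)\ge-\theta(2/3)^{2\be}\mu_j^{2\be+1}+O(\mu_j)$, which contradicts $\le-\theta(1-\va)\mu_j^{2\be+1}$ for small $\va$. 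The same argument applies at $\mathrm{v_1}$ when $\deg(\mathrm{v_1})\ge3$.

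If instead $e$ is an end-edge, or the maximum sits at an interior point of another edge or at a degree-one tip, counting level sets is not enough. For an end-edge we use \eqref{eq29}: the level is $-\theta(1-\va)2^{2\be}\mu^{2\be+1}$, and any configuration that is not a half-soliton concentrated at the tip $\mathrm{v_1}$ of $e$ has $N\ge2$ preimages, hence energy at least $-\theta\mu^{2\be+1}$ by Lemma \ref{lem6}, which is a contradiction. In the non-end-edge case the maximum point may lie elsewhere, for example at the tip of another end-edge, which would have lower energy than the competitors confined to $e$. This is exactly the hard point. There we would exploit the constraint instead: the minimizer over $\mc{H}_\mu$ must satisfy $\max_e u_j=\max u_j$, and by the rescaled compactness its profile converges to a soliton centered at a single concentration point. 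We would then show that the profile near $e$ carries vanishing mass, unless the concentration point lies on $e$ or at a vertex of $e$, and in that case its height on $e$ is only a fraction of the peak, contradicting the equality of the maxima.

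The main obstacle is this localization of the profile, namely proving that the maximum of $u_j$ outside $e$ must be close, at scale $\mu_j^{-\be}$, to $e$. We would establish it through an exponential-decay estimate for the Euler--Lagrange equation satisfied away from the constrained maximum point.
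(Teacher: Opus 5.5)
\textbf{There is a genuine gap.} The proof is incomplete exactly where it matters. The case you call the hard point ($e$ not an end-edge, with the maximum also attained off $e$, e.g.\ inside another edge or at the tip of another end-edge) is left as a programme: blow-up, convergence to a single soliton profile, and exponential decay for the Euler--Lagrange equation. None of it is carried out.

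Your claim that counting level sets is not enough there is also wrong, and this is the idea you are missing. Take $u\ge0$, let $M=\|u\|_{L^\infty(e)}=\|u\|_{L^\infty(f)}$ for some edge $f\neq e$, and set $\delta:=\max_{g\ \text{bounded}}\min_g u$.
\begin{itemize}
\item For a.e.\ $t\in(\delta,M)$, the level $t$ is attained on $e$, between a point where $u\le\delta$ and the maximum point on $e$.
\item It is also attained on $f$: the minimum of $u$ on $f$ is $\le\delta$ if $f$ is bounded, and $u\to0$ along $f$ if $f$ is a halfline.
\end{itemize}
This gives $N=2$, which is the precise form of your unjustified claim about ``configurations that are not a half-soliton at the tip''. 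Against \eqref{eq29}, it disposes of the end-edge case.

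If $\deg(\mathrm{v_1}),\deg(\mathrm{v_2})\ge3$, label the endpoints so that a point with $u\le\delta$ lies between $\mathrm{v_1}$ and the maximum point on $e$. A third preimage then lies in one of two places:
\begin{itemize}
\item on the part of $e$ between the maximum point and $\mathrm{v_2}$; or,
\item if $u(\mathrm{v_2})>t$, on another edge at $\mathrm{v_2}$. There are at least two edge-ends there besides that of $e$, so one of them is not on $f$, unless $f$ is a self-loop at $\mathrm{v_2}$, which is then crossed twice.
\end{itemize}
This works wherever the second maximum sits. So Lemma \ref{lem6} with $N=3$, compared with \eqref{eq28}, gives the contradiction since $(2/3)^{2\be}<1$. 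No localization of the profile is needed.

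A second gap affects even the cases you do treat. Lemma \ref{lem6} needs the multiplicity for a.e.\ $t\in(0,\|u\|_{L^\infty(\mc{G})})$. Levels below the minima of $u$ on the bounded edges may be attained only once, say along a single halfline. Your reason that $u_j\to0$ along every edge leaving $\mathrm{v_2}$ is false for bounded edges. Also, ``up to a mass $o(1)$'' is not a hypothesis Lemma \ref{lem6} can absorb.

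The paper handles the low levels with a graph extension. At a point $x_3\in e$ with $u(x_3)=\delta$, it attaches two pendant edges of length $\la$ and extends $u$ linearly from $\delta$ to $0$ on them. Every $t\in(0,\delta)$ is then attained on both new edges and at least once in $\mc{G}$. The cost is $\frac23\la\delta^2$ in mass and at most $\delta^2/\la$ in energy. With $\delta^2\le\mu/l_0$, where $l_0$ is the shortest edge length, this only multiplies the leading term by $(1+\la/l_0)^{2\be+1}$ and adds an $O(\mu)$ error. Choosing $\la,\va$ small and $\mu$ large then gives the contradiction in both cases. With these two ingredients, neither the blow-up framework nor the sequence $\mu_j\to\infty$ is needed.
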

\begin{proof}
	By replacing $u$ by $|u|$, we may assume that $u\geq 0$. Let $M:=\|u\|_{L^\infty(e)}$. It follows from $u\in \mc{H}_\mu(\mc{G})$ that $M\geq \|u\|_{L^\infty(\mc{G}\setminus e)}$. We argue by contradiction. If not, we have 
	\begin{equation}\label{eq33}
		M=\|u\|_{L^\infty(\mc{G}\setminus e)}
	\end{equation}
	for large enough $\mu$. Let 
	\[B:=\bigcup_{g\in E}g, \]
	where $g$ be a bounded edge of $\mc{G}$. We denote
	\begin{equation}\label{eq34}
		\delta:=\max_{g\in B}\min_{x\in g}u(x). 
	\end{equation}
	Clearly, we have $\delta\in [0,M]$. We claim that, for any $t\in (\delta,M)$,
	\begin{equation}\label{eq35}
	\mathrm{card}\{x\in \mc{G}:\; u(x)=t \}\geq 2.
	\end{equation}
	Moreover, if $e$ is not an end-edge of $\mc{G}$, i.e. $\mathrm{deg}(\mathrm{v_1})\geq 3$,  then for any $t\in (\delta,M)$, we have
	\begin{equation}\label{eq36}
	\mathrm{card}\{x\in \mc{G}:\; u(x)=t \}\geq 3.
	\end{equation}
	Indeed, it follows from \eqref{eq33} that the claim is trivial if $\delta=M$. Therefore in what follows we shall always assume, without loss of generality, that $\delta \in [0,M)$. 
	By the definition of $M$ and \eqref{eq33}, we see that there exists $f\neq e$ such that $M=\|u\|_{L^\infty(f)}$ (may be $f\cap e=\emptyset$ or $e$ and $f$ share a vertex). Thus, according to the mean value theorem, there exist $x_1\in e$ and $x_2\in f$ such that $u(x_1)=u(x_2)=t$ for any $t\in  (\delta,M)$ (see Figure \ref{fig:1}). This implies \eqref{eq35}. 
	\begin{figure}[hb]
		\centering
		\begin{minipage}[b]{.39\linewidth}
			\centering
\begin{tikzpicture}[x=0.75pt,y=0.75pt,yscale=-1,xscale=1]
\draw [color={rgb, 255:red, 0; green, 0; blue, 0 }  ,draw opacity=1 ]   (347.67,41) -- (212.33,41) (247.67,45) -- (247.67,37) ;
\draw [shift={(212.33,41)}, rotate = 180] [color={rgb, 255:red, 0; green, 0; blue, 0 }  ,draw opacity=1 ][fill={rgb, 255:red, 0; green, 0; blue, 0 }  ,fill opacity=1 ][line width=0.75]      (0, 0) circle [x radius= 3.35, y radius= 3.35]   ;
\draw [shift={(347.67,41)}, rotate = 180] [color={rgb, 255:red, 0; green, 0; blue, 0 }  ,draw opacity=1 ][fill={rgb, 255:red, 0; green, 0; blue, 0 }  ,fill opacity=1 ][line width=0.75]      (0, 0) circle [x radius= 3.35, y radius= 3.35]   ;
\draw  [dash pattern={on 4.5pt off 4.5pt}]  (179.33,41) -- (212.33,41) ;
\draw  [dash pattern={on 4.5pt off 4.5pt}]  (347.67,41) -- (379.33,41) ;
\draw (247.33,45.4) node [anchor=north west][inner sep=0.75pt] {$x_{1}$};
\draw (275.33,17.4) node [anchor=north west][inner sep=0.75pt]    {$e$};
\draw (214.33,44.4) node [anchor=north west][inner sep=0.75pt]    {$\mathrm{v}_{1}$};
\draw (349.67,44.4) node [anchor=north west][inner sep=0.75pt]    {$\mathrm{v}_{2}$};
\end{tikzpicture}
		\end{minipage}%
		\begin{minipage}[b]{.69\linewidth}
			\centering
\begin{tikzpicture}[x=0.75pt,y=0.75pt,yscale=-1,xscale=1]

\draw [color={rgb, 255:red, 0; green, 0; blue, 0 }  ,draw opacity=1 ]   (347.67,41) -- (212.33,41) (247.67,45) -- (247.67,37) ;
\draw [shift={(212.33,41)}, rotate = 180] [color={rgb, 255:red, 0; green, 0; blue, 0 }  ,draw opacity=1 ][fill={rgb, 255:red, 0; green, 0; blue, 0 }  ,fill opacity=1 ][line width=0.75]      (0, 0) circle [x radius= 3.35, y radius= 3.35]   ;
\draw [shift={(347.67,41)}, rotate = 180] [color={rgb, 255:red, 0; green, 0; blue, 0 }  ,draw opacity=1 ][fill={rgb, 255:red, 0; green, 0; blue, 0 }  ,fill opacity=1 ][line width=0.75]      (0, 0) circle [x radius= 3.35, y radius= 3.35]   ;
\draw  [dash pattern={on 4.5pt off 4.5pt}]  (179.33,41) -- (212.33,41) ;
\draw  [dash pattern={on 4.5pt off 4.5pt}]  (347.67,41) -- (379.33,41) ;
\draw  [dash pattern={on 4.5pt off 4.5pt}]  (170.33,89) -- (212.33,90) ;
\draw [color={rgb, 255:red, 0; green, 0; blue, 0 }  ,draw opacity=1 ]   (347.67,90) -- (212.33,90) (247.67,94) -- (247.67,86) ;
\draw [shift={(212.33,90)}, rotate = 180] [color={rgb, 255:red, 0; green, 0; blue, 0 }  ,draw opacity=1 ][fill={rgb, 255:red, 0; green, 0; blue, 0 }  ,fill opacity=1 ][line width=0.75]      (0, 0) circle [x radius= 3.35, y radius= 3.35]   ;

\draw (247.33,48.4) node [anchor=north west][inner sep=0.75pt]    {$x_{2}$};
\draw (275.33,20.4) node [anchor=north west][inner sep=0.75pt]    {$f$};
\draw (189,60) node [anchor=north west][inner sep=0.75pt]   [align=left] {or};
\draw (249.5,95.9) node [anchor=north west][inner sep=0.75pt]    {$x_{2}$};
\draw (266.33,67.4) node [anchor=north west][inner sep=0.75pt]    {$f$};
\draw (352.33,85.4) node [anchor=north west][inner sep=0.75pt]    {$\infty $};
\end{tikzpicture}
		\end{minipage}%
		\caption{$\mathrm{card}\{x\in \mc{G}:\; u(x)=t \}\geq 2$}
		\label{fig:1}
	\end{figure}

	Since $e$ is not an end-edge of $\mc{G}$, we may assume that $\mathrm{deg}(\mathrm{v_1})\geq 3$ and $\mathrm{deg}(\mathrm{v_2})\geq 3$, where $e\succ \mathrm{v_1}$ and $e\succ \mathrm{v_2}$. Choose $x_3,x_4\in e$ satisfying $u(x_3)=\delta$ and $u(x_4)=M$. By replacing $\mathrm{v_1}$ by $\mathrm{v_2}$ (if necessary), we may assume that, on the edge $e$, $x_3$ lies between the vertex $\mathrm{v}_1$ and $x_4$.
	\begin{figure}[hb]
		\centering
		\begin{minipage}[b]{.59\linewidth}
			\centering
			\includegraphics[height=3cm]{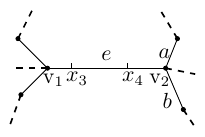}\vskip.3cm
		\end{minipage}
		\begin{minipage}[b]{.39\linewidth}
			\centering
			\includegraphics[height=4cm]{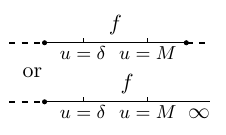}
		\end{minipage}
		\caption{$\mathrm{card}\{x\in \mc{G}:\; u(x)=t \}\geq 3$}
		\label{fig:2}
	\end{figure}
We assume, for notational convenience, that $x_1\in (x_3,x_4)$.
Let $a,b$ denote two edges adjacencing from $\mathrm{v}_2$ (see Figure \ref{fig:2}).

{\bf{Case 1}}: $f\neq a$.

If $u(\mathrm{v}_2)\leq t$, then there exists $x_5\in (x_4,|e|)$ such that $u(x_5)=t$ by the mean value theorem. Thus, for any $t\in (\delta,M)$, 
we have
\begin{equation*}
	\mathrm{card}\{x\in \mc{G}:\; u(x)=t \}\geq \mathrm{card}\{x_1,x_2,x_5 \}= 3,
\end{equation*}
where $x_1, x_5\in e$ and $x_2\in f$.

If $u(\mathrm{v}_2)> t$, by the continuity of $u$, after $u$ takes the variable $\mathrm{v}_2$, we discuss it in two cases. 
If the edge $a\in \mc{G}$ is bounded, according to \eqref{eq34}, we see that $(\delta, M)\subset u(a) $. 
Therefore, for any $t\in (\delta,M)$, there exists $x_5\in a$ such that $u(x_5)=t$ by the mean value theorem. 
Thus, for any $t\in (\delta,M)$, we have
\begin{equation}\label{eq37}
	\mathrm{card}\{x\in \mc{G}:\; u(x)=t \}\geq \mathrm{card}\{x_1,x_2,x_5 \}= 3,
\end{equation}
where $x_1\in e$, $x_2\in f$ and $x_5\in a$. On the other hand, if the edge $a\in \mc{G}$ is unbounded, by the fact $u(\mathrm{v}_2)>t$ and $u(x)\to 0$ as $x\to\infty$, then we duduce that there exists $x_5\in a$ such that $u(x_5)=t$. This implies that \eqref{eq37} holds.
	
{\bf{Case 2}}: $f\neq b$.

By arguments similar to those above, it is easy to show that \eqref{eq36} holds for this case.
	
{\bf{Case 3}}: $f= a=b$.

In this case, $f$ is necessarily a bounded edge and is a self-loop attached at $\mathrm{v}_2$ (see Figure \ref{fig:3}).
\begin{figure}[hb]
	\centering
	\includegraphics[height=3cm]{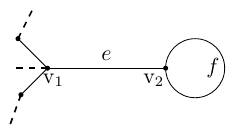}
	\caption{$\mathrm{card}\{x\in \mc{G}:\; u(x)=t \}\geq 3$}
	\label{fig:3}
\end{figure}
Since $u$ attains the values $\delta$ and $M$ on this loop, every intermediate value $t\in (\delta, M)$ is attained at least twice.
Thus, we also obtain \eqref{eq37}, where $x_1\in e$ and $x_2,x_5\in f$.	

Next, we want to apply the Lemma \ref{lem6}, and therefore we hope that \eqref{eq35} and \eqref{eq36} hold for all $t\in (0,M)$, not just for $t\in(\delta, M)$. 
Therefore, we construct a larger graph $\mc{G}^\prime$ from $G$ in the following way: given $\la > 0$, add two edges $e_1$ and $e_2$, both of length $\lambda$, to $G$, and connect them to $G$ via the vertex $x_3$ (see Figure \ref{fig:4}).
\begin{figure}[hb]
	\centering
	\includegraphics[height=3cm]{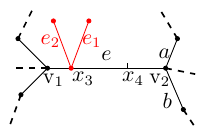}
	\caption{The graph $\mc{G}^\prime$ generated from $\mc{G}$}
	\label{fig:4}
\end{figure}
Putting a coordinate $s\in [0,\la]$	on each $e_i$ such that $e_i$ is attached to $x_3$ at $s=0$, where $i=1,2$. We extend $u$ to $\mc{G}^\prime$ by 
\begin{align*}
	\widetilde{u}(x):=\left\{\begin{array}{ll}
	\dfrac{\delta}{\la} (\la-x)&\quad \text{if} \quad x\in e_i \sim [0,\la ]\\
	u(x)&\quad \text{if} \quad x\in \mc{G}\setminus {e_i},
	\end{array} \right.
\end{align*}
where $i=1,2$. Thus, $\widetilde{u}\in H^1(\mc{G}^\prime)$.	Moreover, we have 
\[\mathrm{card}\{x\in \mc{G}^\prime:\; u(x)=t \}\geq 3 \qquad \text{for all} \quad t\in (0,\delta).\]
Indeed, due to $\mc{G}$ is a noncompact graph, there exists a unbounded edge $l\in \mc{G}^\prime$. 
According to $\widetilde{u}(x)=u(x)\to 0$ as $x\to\infty$, there exists $x_0\in l$ such that $\widetilde{u}(x_0)=t$ for any $t\in (0,\delta)$. Thus, $\widetilde{u}$ achieves every value $t\in (0,\delta)$ at least there times on $\mc{G}^\prime$: once on $e_1$, once on $e_2$, and once on $l$. 
Therefore, combining \eqref{eq35} and \eqref{eq36}, we deduce that
\begin{equation*}
	\mathrm{card}\{x\in \mc{G}^\prime:\; \widetilde{u}(x)=t \}\geq 2 \qquad \text{for all} \quad t\in (0,M).
\end{equation*}
	Moreover, if $e$ is not an end-edge of $\mc{G}^\prime$, i.e. $\mathrm{deg}(\mathrm{v_1})\geq 3$,  then we have
\begin{equation*}
	\mathrm{card}\{x\in \mc{G}^\prime:\; \widetilde{u}(x)=t \}\geq 3 \qquad \text{for all} \quad t\in (0,M).
\end{equation*}
According to Lemma \ref{lem6} ($N=2$), we see 
\begin{align*}
	\mc{E}(\widetilde{u},\mc{G}^\prime)&\geq -\theta \left(\int_{\mc{G}^\prime}|\widetilde{u}(x)|^2\dif x\right)^{2\be +1}\\
	&=-\theta \left(\intg |u(x)|^2\dif x +2\int_{0}^{\lambda}|u(s)|^2\dif s\right)^{2\be+1} \\
	&=-\theta \left(\mu +\dfrac{2}{3}\delta ^2\lambda \right)^{2\be+1}\\
	&>-\theta \left(\mu+\lambda \delta^2\right)^{2\be+1}.
\end{align*}
On the other hand, it follows from  Remark \ref{rem1} that 
\begin{align*}
	\mc{E}(\widetilde{u},\mc{G}^\prime)
	&=\mc{E}(u,\mc{G})+2\mc{E}(\dfrac{\delta}{\la} (\la-x),e_1)
	\leq \mc{E}(u,\mc{G})+\dfrac{\delta^2}{\lambda}.
\end{align*}
This implies 
\begin{align}\label{eq38}
	\mc{E}(u,\mc{G})&\geq \mc{E}(\widetilde{u},\mc{G}^\prime)-\dfrac{\delta^2}{\lambda}
	\geq -\theta \left(\mu+\lambda \delta^2\right)^{2\be+1}-\dfrac{\delta^2}{\lambda}.
\end{align}
 According to Lemma \ref{lem6} ($N=3$), by arguments similar to those above, it is easy to show that 
 \begin{align}\label{eq39}
	\mc{E}(u,\mc{G})
	&\geq \mc{E}(\widetilde{u},\mc{G}^\prime)-\dfrac{\delta^2}{\lambda}\nonumber\\
	&\geq -\theta \left(\dfrac{2}{3}\right)^{2\be}\left(\int_{\mc{G}^\prime}|\widetilde{u}(x)|^2\dif x\right)^{2\be +1}-\dfrac{\delta^2}{\lambda}\nonumber\\
	&= -\theta \left(\dfrac{2}{3}\right)^{2\be}\left(\mu +\dfrac{2}{3}\delta ^2\lambda \right)^{2\be +1}-\dfrac{\delta^2}{\lambda},
 \end{align}
 when $\mathrm{deg}(\mathrm{v_1})\geq 3$. Now, let $\va>0$ small enough such that $\mu\geq \mu_\va$. It follows from \eqref{eq29} of Lemma \ref{lem5} and \eqref{eq38} that 
 \begin{equation*}
	\theta (1-\va)2^{2\be}\mu^{2\be+1}\leq -\inf_{v\in \mc{H}_\mu(\mc{G})}\mc{E}(v,\mc{G})\leq\theta \left(\mu+\lambda \delta^2\right)^{2\be+1}+\dfrac{\delta^2}{\lambda}
 \end{equation*}
 for $\mathrm{deg}(\mathrm{v_1})=1$. Similarly, it follows from \eqref{eq28} of Lemma \ref{lem5} and \eqref{eq39} that 
 \begin{align*}
	\theta (1-\va)\mu^{2\be+1}\leq -\inf_{v\in \mc{H}_\mu(\mc{G})}\mc{E}(v,\mc{G})
	&\leq \theta \left(\dfrac{2}{3}\right)^{2\be}\left(\mu +\dfrac{2}{3}\delta ^2\lambda \right)^{2\be +1}+\dfrac{\delta^2}{\lambda}\\
	&\leq \theta \left(\dfrac{2}{3}\right)^{2\be}\left(\mu +\lambda\delta^2 \right)^{2\be +1}+\dfrac{\delta^2}{\lambda}
 \end{align*}
 for $\mathrm{deg}(\mathrm{v_1})\geq 3$. Note that for any bounded edge $h\in \mc{G}^\prime$, we have 
 \begin{equation*}
	\left(\min_{x\in h}u(x)\right)^2\leq\dfrac{1}{|h|}\int_{h}|u(x)|^2\dif x\leq \dfrac{\mu}{l_0},
 \end{equation*}
 where $l_0>0$ is the length of the shortest edge of $\mc{G}^\prime$. Thus, it follows from \eqref{eq34} that $$\delta^2\leq \dfrac{\mu}{l_0}.$$
Hence, if $\mathrm{deg}(\mathrm{v_1})=1$, we have
\begin{align*}
	\theta (1-\va)2^{2\be}\mu^{2\be+1}
	&\leq \theta \left(\mu+\lambda \delta^2\right)^{2\be+1}+\dfrac{\delta^2}{\lambda}\\
	&\leq \theta\left(\mu+\dfrac{\mu}{l_0}\lambda \right)^{2\be+1}+\dfrac{\mu}{\lambda l_0}.
\end{align*}
This implies 
\begin{equation}\label{eq40}
	\theta (1-\va)2^{2\be}\mu^{2\be}\leq \theta \left(1+\dfrac{\lambda}{l_0}\right)^{2\be+1}\mu^{2\be}+\dfrac{1}{\lambda l_0}.
\end{equation}
Fix $\lambda>0$ small enough. Let $\va>0$ small enough, such that the coefficient of $\mu^{2\be}$ on the left of \eqref{eq40} is strictly bigger than the corresponding coefficient on the right. Thus, a contradiction is obtained if $\mathrm{deg}(\mathrm{v_1})=1$ and $\mu$ large enough. Similarly, if $\mathrm{deg}(\mathrm{v_1})\geq 3$, we have 
\begin{align*}
	\theta (1-\va)\mu^{2\be+1}&\leq\theta \left(\dfrac{2}{3}\right)^{2\be}\left(\mu +\lambda\delta^2 \right)^{2\be +1}+\dfrac{\delta^2}{\lambda}\\
	&\leq \theta \left(\dfrac{2}{3}\right)^{2\be}\left(\mu +\dfrac{\mu}{l_0}\lambda\right)^{2\be +1}+\dfrac{\mu}{\lambda l_{0}}.
\end{align*}
This implies
\begin{equation}\label{eq41}
	\theta (1-\va)\mu^{2\be}\leq \theta \left(\dfrac{2}{3}\right)^{2\be}\left(1 +\dfrac{\lambda}{l_0}\right)^{2\be +1}\mu^{2\be}+\dfrac{1}{\lambda l_{0}}.
\end{equation}
Fix $\lambda>0$ small enough. Let $\va>0$ small enough, such that the coefficient of $\mu^{2\be}$ on the left of \eqref{eq41} is strictly bigger than the corresponding coefficient on the right. Thus, a contradiction is obtained if $\mathrm{deg}(\mathrm{v_1})\geq3$ and $\mu$ large enough. Thus, the result follows.

\end{proof}
\begin{proof}[{\bf{Proof of Theorem \ref{thm1}}}]
	Let $u\in \mc{H}_\mu(\mc{G})$ be a minimizer which obtained by Proposition \ref{pro2}. Clearly, $u\in S_\mu(\mc{G})$.
	According to the strict inequality of Proposition \ref{pro3}, \eqref{eq32} is stable under small perturbations of $u$ in the $L^\infty$ norm.
	On the other hand, since $H^1(\mc{G})\hookrightarrow L^\infty(\mc{G})$, we obtain that \eqref{eq32} is also stable under small perturbations of $u$ in the $H^1(\mc{G})$ norm. 
	Recalling the definition of $\mc{H}_\mu(\mc{G})$, $u$ lies in the interior of $\mc{H}_\mu(\mc{G})$. 
	This implies that $u$ is not only a global minimizer in $\mc{H}_\mu(\mc{G})$, but also a local minimizer in $S_\mu(\mc{G})$.
	Thus, a standard argument yields that $u$ is a nontrivial solution of the equation \eqref{eq1}.
	It follows from Proposition \ref{pro3} that the maximum of $u$ is attained only at the bounded edge $e$.
	Next, we claim that either $u>0$ or $u<0$. Clearly, since $u$ is a minimizer, $|u|$ is also a minimizer. Since $|u|\in S_\mu(\mc{G})$, $|u|$ is not identically zero. 
	According to Proposition 3.3 of \cite{AST2015}, up to phase multiplication, one has that $|u|>0$ on $\mc{G}$. Since $\mc{G}$ is connected, we obtain either $u>0$ or $u<0$.
	
	Recalling the definition of $\mc{H}(\mc{G})$, the edge $e$ can be chose in $k$ different ways, so one can obtain $k$ local minimizer in $S_\mu(\mc{G})$. In other words, there exists at least $k$ bounded states $\{u_i\}_{i=1}^{k}$ for large $\mu$. Moreover, the maximum of $u_i$ is attained only at the edge $e_i$ and either $u_i>0$ or $u_i<0$.
	Finally, for each bounded edge $e_i$, by \eqref{eq32} of Proposition \ref{pro3}, we obtain that those bounded states are geometrically distinct solutions.
	
\end{proof}

\bigskip

\noindent{\bf Acknowledgements:}
\,\,\, The author was supported by Research Start-up Fund of Gannan Normal University (415273).



\begin{thebibliography}{99}
	{\footnotesize
	
	
	\bibitem{ACFN2014} R. Adami, C. Cacciapuoti, D. Finco, D. Noja, Constrained energy minimization and orbital stability for the NLS equation on a star graph, Ann. Inst. H. Poincar\'{e} Anal. Non Lin\'{e}aire, 31(2014) 1289--1310.
	
	\bibitem{ACFNjde} R. Adami, C. Cacciapuoti, D. Finco, D. Noja, Variational properties and orbital stability of standing waves for NLS equation on a star graph, J. Difference Equ. 257(2014) 3738--3777.
	
	\bibitem{ACFN2016} R. Adami, C. Cacciapuoti, D. Finco, D. Noja, Stable standing waves for a NLS on star graphs as local minimizers of the constrained energy, J. Differ. Equ., 260(2016) 7397--7415.
	
	\bibitem{ADST2019} R. Adami , S. Dovetta, E. Serra, P. Tilli, Dimensional crossover with a continuum of critical exponents for NLS on doubly periodic metric graphs, Anal. PDE 12(2019) 1597--1612.
	
	\bibitem{AST2015} R. Adami, E. Serra, P. Tilli, NLS ground states on graphs, Calc. Var. Partial Differential Equations, 54(2015) 743--761.
	
	\bibitem{AST2016} R. Adami, E. Serra, P. Tilli, Threshold phenomena and existence results for NLS ground states on metric graphs, J. Funct. Anal. 271 (2016) no. 1, pp. 201--223.
	
	\bibitem{ASTcmp} R. Adami, E. Serra, P. Tilli, Negative energy ground states for the $L^2$-critical NLSE on metric graphs. Comm. Math. Phys. 352(2017), no. 1, 387--406.
	
	\bibitem{ACT2024} F. Agostinho, S. Correia, H. Tavares, Classification and stability of positive solutions to the NLS equation on the $\mathcal{T}-$ metric graph. Nonlinearity, 37(2024) 025005.
	
	\bibitem{BL} H. Berestycki, P. Lions, Nonlinear scalar field equations, I existence of a ground state. Arch. Rational Mech. Anal., 82(1983), 313--345. 
	
	\bibitem{BCFK} G. Berkolaiko, R. Carlson, S. Fulling, P. Kuchment, Quantum graphs and their applications, Contemporary mathematics, vol. 415. Providence, RI: American Mathematical Society, 2006.	
	
	\bibitem{BK} G. Berkolaiko, P. Kuchment, Introduction to quantum graphs. In: Mathematical Surveys and Monographs, vol. 186. Providence, RI: American Mathematical Society, 2013.
	
	\bibitem{BMD2021} G. Berkolaiko, J. Marzuola, D. Pelinovsky, Edge-localized states on quantum graphs in the limit of large mass. Ann. Inst. H. Poincar\'{e} Anal. Non Lin\'{e}aire, 38(2021) 1295--1335.
	
	\bibitem{CDS} C. Cacciapuoti, S. Dovetta, E. Serra, Variational and stability properties of constant solutions to the NLS equation on compact metric graphs. Milan J. Math. 86(2018), no. 2, 305--327.
	
	\bibitem{CJS2022} X. Chang, L. Jeanjean, N. Soave, Normalized solutions of $L^2$–supercritical NLS equations on compact metric graphs. Ann. Inst. H. Poincar\'{e} Anal. Non Lin\'{e}aire, 41(2022) 933--959. 
	
	\bibitem{DDGS2023} C. De Coster, S. Dovetta, D. Galant, E. Serra, On the notion of ground state for nonlinear Schr\"odinger equations on metric graphs. Calc. Var. Partial Differential Equations, 62(2023) 159.
	
	\bibitem{D2018JDE} S. Dovetta, Existence of infinitely many stationary solutions of the $L^2$-subcritical and critical NLSE on compact metric graphs. J. Differential Equations 264 (2018), no. 7, 4806--4821.
	
	\bibitem{DST2020} S. Dovetta, E. Serra, P. Tilli, NLS ground states on metric trees: existence results and open questions. J. London Math. Soc. (2), 102(2020) 1223--1240.
	
	\bibitem{Duff1970} G. Duff, Integral inequalities for equimeasurable rearrangements, Canadian J. Math., 22(1970) 408--430.
	
	\bibitem{EKKST} P. Exner, J. Keating, P. Kuchment, T. Sunada, A. Teplyaev, Analysis on graphs and its applications, Proceedings of Symposia in Pure Mathematics, vol. 77. Providence, RI: American Mathematical Society, 2008.
	
	\bibitem{Noja2014} D. Noja, Nonlinear Schr\"odinger equation on graphs: recent results and open problems, Philos. Trans. Roy. Soc. A, 372(2014) 20130002.
	
	\bibitem{P2018} A. Pankov, Nonlinear Schr\"odinger equations on periodic metric graphs. Discrete Contin. Dyn. Syst., 38(2018) 697--714.
	
	
	\bibitem{PS2022} D. Pierotti, N. Soave, Ground states for the NLS equation with combined nonlinearities on noncompact metric graphs. SIAM J. Math. Anal., 54(2022) 768--790.
	
	\bibitem{Ten2016} L. Tentarelli, NLS ground states on metric graphs with localized nonlinearities. J. Math. Anal. Appl., 433(2016) no. 1, pp. 291--304.
	
	}
\end{thebibliography}
\end{document}